\documentclass[11pt]{amsart}

\usepackage{hyperref}
\usepackage{amssymb,amsmath}
\usepackage[alphabetic,nobysame,abbrev]{amsrefs}
\usepackage{enumerate}
\usepackage{color}
\usepackage{pdfsync}
\RequirePackage{fix-cm}
\usepackage{bm}
\usepackage{mdframed}
\usepackage{cancel}
\usepackage{makecell}
\usepackage{pdfpages}
%

\usepackage{pgf,tikz}
\usepackage{mathrsfs}
\usetikzlibrary{arrows}
\usetikzlibrary{arrows,decorations.markings}
\usetikzlibrary{matrix}
\usetikzlibrary{backgrounds}


\definecolor{qqqqff}{rgb}{0.,0.,1.}
\definecolor{xdxdff}{rgb}{0.49019607843137253,0.49019607843137253,1.}

\definecolor{qqqqff}{rgb}{0.,0.,1.}


\setlength{\textheight}{7.9 in} \setlength{\textwidth}{5.2 in}

\usepackage{tikz}
\usepackage{graphicx}
\usepackage[cmtip,all]{xy}

\usepackage[draft]{todonotes}
\usepackage[cmtip,all]{xy}

\usepackage{hyperref}
\usepackage[utf8]{inputenc}
\usepackage{amsmath,amsfonts,amssymb,euscript,graphicx,color,tikz}


\theoremstyle{plain}
\newtheorem{theorem}{Theorem}[section]
\newtheorem{thm}[theorem]{Theorem}
\newtheorem{lem}[theorem]{Lemma}

\newtheorem{pro}[theorem]{Proposition}

\theoremstyle{definition}
\newtheorem{DEF}[theorem]{Definition}

\newtheorem{exa}[theorem]{Example}

\newtheorem{con}[theorem]{Convention}
\newtheorem{rem}[theorem]{Remark}


%
%
%
%
%
%
%
%
\numberwithin{equation}{section}


\newcommand{\sub}{\subseteq}

\newcommand{\fm}{(\cdot,\cdot)}
\newcommand{\fh}{\mathfrak{h}}

\newcommand{\K}{\mathbb{K} }

\newcommand{\Z}{\mathbb{Z} }

\newcommand{\ffg}{\mathfrak{g}}

\newcommand{\ep}{\hfill$\Box$}

\def\ad{\hbox{ad}}
\def\Ad{\hbox{Ad}}

\def\sg{\sigma}
\def\a{\alpha}
\def\b{\beta}
\def\lam{\lambda}
\def\Lam{\Lambda}
\def\ep{\epsilon}

\def\supp{\hbox{supp}}
\def\id{\hbox{id}}
\def\Aut{\hbox{Aut}}



\def\fm{(\cdot,\cdot)}
\def\a{\alpha}

\def\w{{\mathcal W}}

\def\sub{\subseteq}

\def\lam{\lambda}
\def\Lam{\Lambda}

\def\1k{\frac{1}{k}}

\def\d{\delta}

\def\b{\beta}

\def\sg{\sigma}

\def\sg{\sigma}

\usepackage{verbatim} 

\def\ad{\hbox{ad}}

\def\bq{{\bf q}}
\def\bbbc{{\mathbb C}}
\def\bbbz{{\mathbb Z}}
\def\Z{{\mathbb Z}}

\def\bbbr{{\mathbb R}}
\def\bbbf{{\mathbb F}}

\def\bbbk{{\mathbb K}}

\def\aa{\mathcal A}

\def\cc{{\mathcal C}}

\def\ep{\epsilon}

\def\ll{{\mathcal G }}

\def\ll{\mathcal L}

\def\supp{\hbox{supp}}

\def\bb{{\mathcal B}}



\def\DynkinNodeSize{1.5mm}
\def\DynkinArrowLength{2mm}
\tikzset{
	dnode/.style={
		circle,
		inner sep=0pt,
		minimum size=\DynkinNodeSize,
		fill=white,
		draw},
	middlearrow/.style={
		decoration={markings,
			mark=at position 0.8 with
			{\draw (0:0mm) -- +(+140:\DynkinArrowLength); \draw (0:0mm) -- +(-140:\DynkinArrowLength);},
		},
		postaction={decorate}
	},
	leftrightarrow/.style={
		decoration={markings,
			mark=at position 0.999 with
			{
				\draw (0:0mm) -- +(+135:\DynkinArrowLength); \draw (0:0mm) -- +(-135:\DynkinArrowLength);
			},
			mark=at position 0.001 with
			{
				\draw (0:0mm) -- +(+45:\DynkinArrowLength); \draw (0:0mm) -- +(-45:\DynkinArrowLength);
			},
		},
		postaction={decorate}
	},
	sedge/.style={
	},
	dedge/.style={
		middlearrow,
		double distance=0.6mm,
	},
	tedge/.style={
		middlearrow,
		double distance=1.0mm+\pgflinewidth,
		postaction={draw}, 
	},
	infedge/.style={
		leftrightarrow,
		double distance=0.5mm,
	},
}

\begin{document}

%
%
\title{Involutive root-graded Lie algebras and Lie tori of type $A$}

\author{Saeid Azam, Mehdi Izadi Farhadi}
\address
{Department of Pure Mathematics\\Faculty of Mathematics and Statistics\\
	University of Isfahan\\ P.O.Box: 81746-73441\\ Isfahan, Iran, and\\
	School of Mathematics, Institute for
	Research in Fundamental Sciences (IPM), P.O. Box: 19395-5746.} \email{azam@ipm.ir, azam@sci.ui.ac.ir}
\address
{Department of Pure Mathematics\\Faculty of Mathematics and Statistics\\
	University of Isfahan\\ P.O.Box: 81746-73441\\ Isfahan, Iran, and\\
	School of Mathematics, Institute for
	Research in Fundamental Sciences (IPM), P.O. Box: 19395-5746.}  \email{m.izadi@ipm.ir}
\thanks{This work is based upon research funded by Iran National Science Foundation (INSF) under project No. 4001480.}
\thanks{This research was in part carried out in the IPM-Isfahan Branch.}
\keywords{{\em Root-graded Lie algebra, Lie torus, Extended affine Lie algebra, Chevalley involution, Involutive Lie algebra.}}


\begin{abstract}
    We investigate the concept of a ``Chevalley involution'' within the framework of root-graded Lie algebras with compatible grading. We provide a characterization of all centerless Lie tori of type $A_\ell(\ell\geq2)$ admitting a Chevalley involution. This work extends and completes the earlier results 
    regarding the existence of such involutions for Lie tori of reduced types.
\end{abstract}
 \subjclass[2020]{17B67, 16W10, 17B65, 17B70}
\maketitle

\section{Introduction}\label{intro}

The concept of a Lie algebra graded by a finite root system or a root-graded Lie algebra over a field of characteristic zero was introduced by Berman and Moody \cite{BM92}. Let $R$ be an irreducible reduced finite root system. A Lie algebra graded by $R$ or an $R$-graded Lie algebra is a triple $(\ll,\ffg,\fh)$ consisting of a Lie algebra $\ll$, a finite-dimensional simple Lie subalgebra $\ffg$ of $\ll$ and a Cartan subalgebra $\fh$ of $\ffg$, so that $\ffg$ has the root system $R$ with respect to $\fh$, satisfying certain natural conditions (see Definition \ref{root-graded}).
Berman and Moody showed that if $R$ is simply-laced of rank $\geq2$, then there is an algebra $\aa$ attached to $\ll$, called the coordinate algebra of $\ll$, which depending on the type and rank of $R$, is either commutative and associative, only associative, or alternative (see Proposition \ref{thm0}). Their motivation was to study and understand the intersection matrix algebras (IM algebras) arising from multiply-affinized Cartan matrices of simply-laced types. Toward this end, they classified $R$-graded Lie algebras, up to central extensions, when $R$ is simply-laced of rank $\geq2$. Then, they applied this classification to obtain a realization for these IM algebras.

Benkart and Zelmanov continued the study of root-graded Lie algebras in \cite{BZ96}. They completed the classification of these Lie algebras for the remaining reduced types. Using a Jordan theoretical approach, Neher described $R$-graded Lie algebras in a unified way, when $R$ is not of type $E_8,F_4$ or $G_2$ \cite{Neh96}. His results remain valid for root systems of infinite rank and Lie algebras over rings.

It is well known that Lie tori are root-graded Lie algebras. The concept of a Lie torus arises in the study of extended affine Lie algebras (or EALAs in short) \cite{Yos06}, \cite{Neh04-1}, \cite{Neh04}. EALAs are natural higher dimensional generalizations of finite-dimensional simple Lie algebras and affine Kac-Moody Lie algebras \cite{AABGP97}, \cite{Neh11}. This class of Lie algebras has been under intensive investigation for the past few decades. The core of an extended affine Lie algebra is its subalgebra generated by non-isotropic root spaces. It is well understood that an EALA's structure can be comprehended through analyzing and coordinatizing its core (or core modulo the center, called the centerless core). This is the point at which the concept of a Lie torus emerges; the centerless core of any EALA is a centerless Lie torus, and conversely, any centerless Lie torus is the centerless core of an EALA \cite{Yos06}. Using structure theory and the well-known recognition theorems of root-graded Lie algebras, the coordinatization theorems for Lie tori have been proved and the structure of EALAs have been determined in \cite{BGK96},\cite{BGKN95}, \cite{Yos00} and \cite{AG01}. According to the type and rank, the coordinate algebras that occur are certain unital associative, Jordan, or alternative algebras, referred to as tori (see Definition \ref{def7}).

Now, let us clarify the origin of our motivation for this work. A Lie algebra over a field of positive characteristic is called a modular Lie algebra. In the classical Lie theory, the work of Chevalley on integral forms ($\Z$-forms) establishes a framework for the transition from non-modular to modular theory, which led to the construction of Chevalley groups. In this context, Chevalley involutions play an important role. 

Originally, extended affine Lie algebras were defined over the field of complex numbers. Later, a definition of an EALA over a field of characteristic zero was proposed by a slight modification of the defining axioms of a complex EALA \cite{Neh04}. {Additionally, in \cite{MY06} a more general form of an EALA was introduced over a field of characteristic zero, known as a locally extended affine Lie algebra.} In \cite{AFI22}, the authors started studying extended affine Lie algebras from the modular point of view. In particular, {they} proved the existence of certain integral forms for the cores of reduced extended affine Lie algebras of rank at least $2$ that are equipped with a Chevalley involution; an involution that acts as minus identity on the Cartan subalgebra. By analogy with finite and affine cases, this allows to define a counterpart for these involutive extended affine Lie algebras in the modular setting. Here, arises a question; “If any EALA admits a Chevalley involution?”; and if the answer is no; “Which EALAs are involutive?”

We investigated the existence problem of Chevalley involutions for EALAs in \cite{AI23}. To do this, we first introduced the notion of a Chevalley involution for a Lie torus and studied the conditions under which a Chevalley involution on the centerless core of an EALA, characterized to be a centerless Lie torus, can be extended initially to the core and subsequently to the entire EALA naturally. With case-by-case considerations and employing the coordinatization theorems of Lie tori, we then examined the existence of Chevalley involutions for centerless Lie tori of reduced types. It turns out that the Chevalley involutions of Lie tori are tied with the Chevalley involutions of finite-dimensional simple Lie algebras and certain (anti)-involutions of their coordinate algebras (various tori), called pre-Chevalley (anti)-involutions; see Definition \ref{chev-inv}(i). Through this, we proved that almost all centerless Lie tori of reduced types admit a Chevalley involution. The exception is for type $A_\ell (\ell\geq2)$; we showed that a centerless Lie torus of type $A_\ell (\ell\geq2)$ admits a Chevalley involution if its coordinate algebra is equipped with a pre-Chevalley (anti)-involution. So, the class of all such involutive Lie tori for this type remained unidentified. 

In this direction, this study proceeds in two steps. First, we introduce the notion of a Chevalley involution for $R$-graded Lie algebras $\ll=(\ll,\ffg,\fh)$ that admit a compatible grading over a free abelian group $\Lam$. In fact, an $R$-graded Lie algebra $\ll=(\ll,\ffg,\fh)$ is said to be $(R,\Lam)$-graded if $\ll=\bigoplus_{\lam\in\Lam}\ll^\lam$ is $\Lam$-graded, and $\ll^0$ contains the finite-dimensional simple Lie algebra $\ffg$ with Cartan subalgebra $\fh$. This class of Lie algebras was introduced by Yoshii as a generalization of root-graded Lie algebras. He studied predivision $(R,\Lam)$-graded Lie algebras where $\Lam$ is an abelian group, and classified them up to central extensions, when $R$ is simply-laced of rank $\geq3$ and $\Lam=\bbbz^n$ \cite{Yos01}. The cores of EALAs serve as examples of these $(R,\Lam)$-graded Lie algebras. 

We continue to explore how Chevalley involutions relate to the concept of an involution compatible with the root-grading (Definition \ref{cam}), as introduced by Gao for root-graded Lie algebras \cite{Gao96}. In his paper, Gao studied involutive root-graded Lie algebras to identify compact forms of IM algebras. Some of his work could be viewed as a unitary version of \cite{BM92}.

We prove that any Chevalley involution of an $(R,\Lam)$-graded Lie algebra $\ll$ is compatible with the $R$-grading, see Proposition \ref{pro3}. {By applying} the results of \cite[$\S2$]{Gao96} to our {context}, we obtain some primary results on Chevalley involutions of $(R,\Lam)$-graded Lie algebras. In particular, we show that any Chevalley involution of an $(R,\Lam)$-graded Lie algebra induces a pre-Chevalley (anti)-involution on its coordinate algebra when $R$ is simply-laced of rank $\geq2$, see Proposition \ref{pro1}.  

In the second step of this work, as a special subclass of root-graded Lie algebras with compatible grading, we examine centerless Lie tori of type $A_\ell$, for $\ell\geq2$. It turns out that there exist certain Lie tori of this type which do not admit any Chevalley involution. We characterize all Lie tori of type $A_\ell (\ell\geq2)$ admitting a Chevalley involution, see Theorem \ref{thm5}. The coordinatization theorem for Lie tori of this type is essential in our characterization; see Theorem \ref{thm4} and Theorem \ref{thm3}. In fact, considering Proposition \ref{pro1}, we identify the coordinate algebras of centerless Lie tori of type $A_\ell (\ell\geq2)$ admitting a pre-Chevalley (anti)-involution, which are the octonion torus and elementary quantum tori (for definition of these tori see \ref{ex1} and \ref{def5}, respectively). This finalizes the investigation of Chevalley involutions for Lie tori of reduced types, as initiated in \cite{AI23}.

The organization of the paper is as follows. In Section \ref{sec2} basic definitions and notation from the theory of root-graded Lie algebras are collected. In addition, we record some necessary background and results from \cite{BM92} needed to describe the coordinate algebras of these Lie algebras. Then we recall the definition of a root-graded Lie algebra with compatible grading; see Definition \ref{root-graded-com}. Here, we introduce the notion of a Chevalley involution for this class of Lie algebras; see Definition \ref{chev-inv}. At the end of the section, Lie tori are defined as a subclass of root-graded Lie algebras with compatible grading, see Definition \ref{Lie-torus}.

Section \ref{sec3} discusses the involutions of an $R$-graded Lie algebra that are compatible with the $R$-grading, see Definition \ref{cam}. Here, some required results from \cite[$\S2$]{Gao96} regarding these involutions are restated. 
We show that Chevalley involutions for an $(R,\Lam)$-graded Lie algebra are compatible with the $R$-grading, see Proposition \ref{pro3}. This enables us to deduce certain immediate consequences concerning Chevalley involutions of $(R,\Lam)$-graded Lie algebras, which will be essential for the proof of our main result in the next section.

Finally, in Section \ref{sec4}  by revisiting the coordinatization theorems related to Lie tori of type $A_\ell$, $\ell\geq 2$, we determine all Lie tori of these types that possess a Chevalley involution, as presented in Theorem \ref{thm5}. This completes the characterization of Lie tori of reduced types admitting a Chevalley involution, paving the way for further study of the modular theory of extended affine Lie algebras.

\section{Preliminaries}\label{sec2}

{Throughout this work $\bbbk$ is a field of characteristic $0$. All vector spaces and algebras are assumed to be over $\bbbk$ and all algebras (except Lie algebras) are assumed to be unital.} If $X$ is a subset of a group, we denote by $\langle X\rangle$ the subgroup generated by $X$. For a positive integer $n$, we denote by $\bbbk[x_1^{\pm1},\ldots,x_n^{\pm1}]$ the associative algebra of Laurent polynomials over $\K$ in $n$ commuting variables $x_1,\ldots,x_n$.
For a Lie algebra $\ll$, by $Z(\ll)$ we mean the center of $\ll$. Also, by a centerless Lie algebra, we mean a Lie algebra with a trivial center.

In this work, we always assign the symbol $R$ to denote an irreducible reduced finite root system. By $R^\times$, we mean $R\setminus\{0\}$.

\subsection{Root-graded Lie algebras}
Here we consider Lie algebras graded by the root lattice of a finite root system, the so-called root-graded Lie algebras introduced by S. Berman and R. V. Moody \cite{BM92}.
We begin by recalling the definition of a root-graded Lie algebra.

\begin{DEF}\label{root-graded}
    Let $\ffg$ be a finite-dimensional simple Lie algebra with Cartan subalgebra $\fh$ and $R$ be the root system of $\ffg$ with respect to $\fh$. A Lie algebra $\ll=(\ll,\ffg,\fh)$ is called {\it graded by} $R$ or {\it $R$-graded} with {\it grading pair} $(\ffg,\fh)$ if
	
    (RG1) $\ll$ contains $\ffg$ as a subalgebra,
	
    (RG2) $\ll=\bigoplus_{\a\in R}\ll_\a$, where $\ll_\a=\{x\in\ll\mid[h,x]=\a(h)x,\text{for all}\;h\in\fh\}$,
	
    (RG3) $\ll_0=\sum_{\a\in R^\times}[\ll_\a,\ll_{-\a}]$.\\
    We will use the abbreviation $\ll$ for $(\ll,\ffg,\fh)$ when there is no confusion.
\end{DEF}

Let $\ll$ be an $R$-graded Lie algebra with grading pair $(\ffg,\fh)$. We have the usual root space decomposition $\ffg=\bigoplus_{\a\in R}\ffg_\a$  with $\ffg_\a\sub\ll_\a$ for $\a\in R$. We fix a Chevalley basis $\{e_\a,h_i\mid e_\a\in\ffg_\a,\a\in R^\times,h_i\in\fh,1\leq i\leq\ell\}$ of $\ffg$, where $\ell=\dim\fh$. Let $G=\langle\exp te_\a\mid t\in\K,\a\in R^\times\rangle$ be the corresponding universal Chevalley group and set
$$\eta_\a(t):=(\exp te_\a)(\exp(-t^{-1}e_{-\a}))(\exp te_\a),$$
$$\mathcal{N}:=\langle \eta_\a(t)\mid t\in\K^\times,\a\in R^\times\rangle,$$
$$\mathcal{T}:=\langle \eta_\a(t)\eta_\a(1)^{-1}\mid t\in\K^\times,\a\in R^\times\rangle.$$
It is well-known that $\mathcal{T}$ is a normal subgroup of $\mathcal{N}$ and $\mathcal{N}/\mathcal{T}\cong\w$, where $\w$ is the Weyl group of $ R$. We consider the group homomorphism 
$$\begin{array}{c}
\Ad:G\rightarrow\Aut\ll\vspace{2mm}\\
\exp te_\a\mapsto\exp\ad te_\a.
\end{array}$$

We now assume that $ R$ is of type $A_\ell(\ell\geq2),D_\ell(\ell\geq4),E_{6,7,8}$. Here, we briefly review some results from \cite[\S 1]{BM92} that will be needed later. Let $\a,\b\in R^\times$. One can choose $w\in\w$ so that $w(\a)=\b$. Let $\eta\in\mathcal{N}$ with $\eta\mathcal{T}\leftrightarrow w$ under the isomorphism $\mathcal{N}/\mathcal{T}\cong\w$. Considering $\ll$ as an integrable $\ffg$-module via adjoint representation, then $\Ad(\eta)(\ll_\a)=\ll_\b$. Let $\tilde\theta_{\b,\a}:=\Ad(\eta)|_{\ll_\a}$. We note that $\tilde\theta_{\b,\a}$ is determined by $\a,\b$ up to a non-zero scalar multiple and does not depend on the choice of $w$ or $\eta$. Since $\eta(\ffg_\a)=\ffg_\b$, then $\eta(e_\a)=\ep e_\b$, for some $\ep\in\K^\times$. Set \begin{equation}\label{lam}
\theta_{\b,\a}:=\ep^{-1}\tilde\theta_{\b,\a}:\ll_\a\rightarrow\ll_\b.
\end{equation}
So $\theta_{\b,\a}$ depends only on $\a,\b$ and the choice of the Chevalley basis. {We have}
\begin{eqnarray*}	
    &&\theta_{\a,\b}=\theta^{-1}_{\b,\a},\\	&&\theta_{\a,\b}\theta_{\b,\gamma}=\theta_{\a,\gamma},\\
    &&\theta_{\a,\a}=\id.
\end{eqnarray*}

For a fixed $\a\in R^\times$, let $\aa=\ll_\a$ as a $\K$-vector space. We proceed to put an algebra structure on $\aa$, not depending on the particular root $\a$. Thus, for any $a \in \aa$, we denote it by $e_\a(a)$ when considering it as an element of $\ll_\a$. Now, let $\b\in R^\times$ and define $e_\b(a)=\theta_{\b,\a}{(e_\a(a))\in\ll_\b}$.

One can define a multiplication on $\aa$ via the notion of an $A_2$-pair. Recall that a pair of linearly independent roots $\a,\b\in R^\times$ is called an {\it $A_2$-pair} if $(\bbbz\a\oplus\bbbz\b)\cap R$ forms an $A_2$-finite root system in $\bbbr\a\oplus\bbbr\b$. Now fix an $A_2$-pair {$(\b,\gamma)$} and define a multiplication $m_{(\b,\gamma)}:\aa\times\aa\rightarrow\aa$ by
\begin{equation}\label{mul}
[e_\b(a),e_\gamma(b)]:=[e_\b,e_\gamma](m_{(\b,\gamma)}(a,b)),
\end{equation}
for $a,b\in\aa$. {Note that the $A_2$-pair $(\gamma,\b)$ defines the opposite multiplications to $m_{(\b,\gamma)}$, that is, $m_{(\gamma,\b)}(a,b)=m_{(\b,\gamma)}(b,a)$ for all $a,b\in\aa$ (see \cite[1.18]{BM92}).} Multiplication (\ref{mul}) makes $\aa$ into a $\K$-algebra. It is called the {\it coordinate algebra (ring of coordinates)} of $\ll$. Moreover:

\begin{thm}[{\cite[1.19, 3.17]{BM92}}]\label{thm0}
    The $\K$-algebra $\aa$ is
    \begin{enumerate}[\upshape (i)]
        \item associative if $R$ is of rank at least $3$;
        
        \item commutative if $R$ is of type $D$ or $E$;

        \item alternative if $R$ is of type $A_2$.
    \end{enumerate}
    In addition, when $ R$ is of type $D$ or $E$, then the multiplication is independent of the choice of $A_2$-pair. If $ R$ is of type $A$, then the two equivalence classes of $A_2$-pairs define opposite multiplications on $\aa$.
\end{thm}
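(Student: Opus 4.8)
The plan is to extract all three structural properties, and the dependence on the $A_2$-pair, from the Jacobi identity applied to triples of root vectors lying in rank-two and rank-three subsystems of $R$, combined with the transport maps $\theta_{\b,\a}$ and the reversal rule $m_{(\gamma,\b)}(a,b)=m_{(\b,\gamma)}(b,a)$ recorded before the statement. The first thing I would do is bookkeep orientations: for two $A_2$-pairs $(\b,\gamma)$ and $(\b',\gamma')$ I want to know when $m_{(\b,\gamma)}=m_{(\b',\gamma')}$ and when instead $m_{(\b,\gamma)}=m_{(\b',\gamma')}^{\mathrm{op}}$. Because the $\theta_{\b,\a}$ are built from $\Ad$ of elements of $\mathcal N$ and obey $\theta_{\a,\b}\theta_{\b,\gamma}=\theta_{\a,\gamma}$, transporting an $A_2$-pair by an element of the Weyl group $\w$ carries $m$ either to $m$ or to $m^{\mathrm{op}}$, according to a sign read off from the Chevalley structure constants; I would organize the entire argument around this orientation sign.

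For associativity when $R$ has rank at least $3$, I would pick an $A_3$-subsystem with simple roots $\a_1,\a_2,\a_3$, so that $\a_1+\a_3\notin R$ while $\a_1+\a_2,\ \a_2+\a_3,\ \a_1+\a_2+\a_3\in R$. Applying the Jacobi identity to $e_{\a_1}(a),e_{\a_2}(b),e_{\a_3}(c)$ and using that $[e_{\a_3}(c),e_{\a_1}(a)]\in\ll_{\a_1+\a_3}=0$ yields
\[
[[e_{\a_1}(a),e_{\a_2}(b)],e_{\a_3}(c)]=[e_{\a_1}(a),[e_{\a_2}(b),e_{\a_3}(c)]].
\]
Rewriting each side by \eqref{mul}, the left-hand side produces $(ab)c$ via the pairs $(\a_1,\a_2)$ and $(\a_1+\a_2,\a_3)$, and the right-hand side produces $a(bc)$ via $(\a_2,\a_3)$ and $(\a_1,\a_2+\a_3)$. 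Realizing $A_3$ as $\{\varepsilon_i-\varepsilon_j\}$, all four of these $A_2$-pairs correspond to increasing index-triples, hence share one orientation, so by the first step the four multiplications coincide and the display reads $(ab)c=a(bc)$.

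For commutativity in types $D$ and $E$ the question is whether the two orderings of a single unordered $A_2$-pair lie in the same orientation class. The key root-theoretic input I would use is that in types $D$ and $E$ every $A_2$-pair extends to a $D_4$-subsystem, inside which the Weyl group contains an element fixing $\b+\gamma$ and interchanging $\b$ and $\gamma$: in $\{\pm\varepsilon_i\pm\varepsilon_j\}$ one checks that for $\b=\varepsilon_1-\varepsilon_2,\ \gamma=\varepsilon_2-\varepsilon_3$ the even signed permutation $\varepsilon_1\mapsto-\varepsilon_3,\ \varepsilon_2\mapsto-\varepsilon_2,\ \varepsilon_3\mapsto-\varepsilon_1,\ \varepsilon_4\mapsto-\varepsilon_4$ does exactly this. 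Checking that this element carries $m$ to $m$ (not $m^{\mathrm{op}}$) gives $m_{(\b,\gamma)}=m_{(\gamma,\b)}$, and together with the reversal rule $m_{(\gamma,\b)}=m_{(\b,\gamma)}^{\mathrm{op}}$ this forces $m_{(\b,\gamma)}=m_{(\b,\gamma)}^{\mathrm{op}}$, i.e.\ commutativity, while the same transport shows the product is independent of the chosen $A_2$-pair. In type $A$ no such swap exists (for $\delta=\varepsilon_i-\varepsilon_k$ the only permutation fixing both $\delta$ and the pair is the identity), so the ordered $A_2$-pairs split into exactly two orientation classes interchanged by reversal, which by the reversal rule define opposite multiplications.

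The hard part will be alternativity in type $A_2$, and I expect it to be the main obstacle. Since $A_2$ contains no $A_3$, the associativity computation above is simply unavailable and the product need not associate. Instead I would work entirely inside the three positive root spaces $\ll_{\a_1},\ll_{\a_2},\ll_{\a_1+\a_2}$ of a single $A_2$ (and their negatives), and try to show by a chain of Jacobi identities that the associator $(a,b,c)=(ab)c-a(bc)$ is an alternating function of its three arguments, which is precisely the alternative law. The difficulty is that with no root outside the rank-two system the Jacobi relations no longer close on a single triple after one application, so pinning down the associator needs the finer rank-two structure theory of \cite[\S3]{BM92} (in effect, the identification of $A_2$-graded Lie algebras with a Tits--Kantor--Koecher-type construction over an alternative coordinate algebra). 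By contrast, I expect associativity and commutativity to fall out fairly directly from the Jacobi identity and the root combinatorics above.
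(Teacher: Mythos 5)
The paper does not prove this statement at all: it is imported verbatim from Berman--Moody, with the citation \cite[1.19, 3.17]{BM92} serving as the proof. So the fair comparison is against the argument in that source, and for parts (i), (ii) and the statements about $A_2$-pairs your reconstruction is essentially Berman--Moody's actual route and is sound. The Jacobi identity in an $A_3$-configuration with $\a_1+\a_3\notin R\cup\{0\}$ is exactly how associativity is obtained; your orbit analysis is correct (in type $A$ the ordered $A_2$-pairs split into the ``head-to-tail'' pattern $(\varepsilon_i-\varepsilon_j,\varepsilon_j-\varepsilon_k)$ and the ``tail-to-head'' pattern $(\varepsilon_i-\varepsilon_j,\varepsilon_k-\varepsilon_i)$, two Weyl orbits interchanged by reversal, while your $D_4$ element showing the swap in types $D,E$ checks out); and your four pairs in the $A_3$ computation do all lie in one class. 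One refinement: the ``sign read off from the Chevalley structure constants'' that you propose to track is in fact automatic. Conjugating the defining relation $[e_\b(a),e_\gamma(b)]=[e_\b,e_\gamma](m_{(\b,\gamma)}(a,b))$ by $\Ad\eta$ and using $\Ad\eta(e_\d(a))=\epsilon_\d\, e_{w\d}(a)$ together with $\Ad\eta[e_\b,e_\gamma]=[\Ad\eta e_\b,\Ad\eta e_\gamma]$ forces the scalars $\epsilon_\b,\epsilon_\gamma,\epsilon_{\b+\gamma}$ and the structure constants to cancel, giving $m_{(w\b,w\gamma)}=m_{(\b,\gamma)}$ on the nose; the only invariant is the Weyl orbit of the \emph{ordered} pair, which is precisely your orientation class.

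The genuine gap is part (iii), and you have in effect conceded it: for alternativity in type $A_2$ you do not give an argument but defer to ``the finer rank-two structure theory of \cite[\S3]{BM92}'', which is exactly the cited result 3.17 --- so that portion of the theorem is assumed rather than proved. The obstruction you sense is real and worth naming precisely: inside a single $A_2$ the product $m_{(\a_1,\a_2)}$ only produces elements of $\ll_{\a_1+\a_2}$, and to form a triple product such as $(ab)c$ one must bracket $\ll_{\a_1+\a_2}$ against \emph{negative} root spaces, which lands in $\ll_0$ and introduces operators on $\aa$ of the form $a\mapsto[[e_{\a}(x),e_{-\a}(y)],e_\b(a)]$ that are not expressible through the multiplication alone. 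Controlling these degree-zero operators --- showing they are multiplication operators up to the associator, whence the associator alternates --- is the entire content of \cite[\S3]{BM92} and is a long computation, not ``a chain of Jacobi identities'' that closes after finitely many obvious steps. So as a proof of the statement as quoted, your attempt establishes (i), (ii), and both assertions about $A_2$-pairs, but leaves (iii) unproved.
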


We recall that two perfect Lie algebras are said to be centrally isogeneous if their universal covering algebras are isomorphic.

\begin{pro}[{\cite[1.29]{BM92}}]
    Let $\ll_1$ and $\ll_2$ be centrally isogeneous Lie algebras. Assume that $\ll_1$ is graded by $R$. Then $\ll_2$ is also $ R$-graded and in such a way that the associative coordinate algebras are isomorphic.
\end{pro}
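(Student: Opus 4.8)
The plan is to route everything through the universal central extension that the two algebras share, exploiting the fact that the coordinate algebra is manufactured entirely from the nonzero root spaces and is therefore insensitive to central maps. Throughout we stay in the simply-laced setting of Theorem \ref{thm0}, so that $\aa$ is defined. Since $\ll_1$ and $\ll_2$ are centrally isogeneous, identifying their universal covers gives a single Lie algebra $\hat\ll\cong\hat\ll_1\cong\hat\ll_2$ together with canonical central surjections $p_i\colon\hat\ll\to\ll_i$ whose kernels $Z_i$ lie in $Z(\hat\ll)$. It therefore suffices to prove: (A) if $\ll$ is $R$-graded then $\hat\ll$ is $R$-graded with coordinate algebra isomorphic to that of $\ll$; and (B) if $\hat\ll$ is $R$-graded and $L=\hat\ll/Z$ with $Z$ central, then $L$ is $R$-graded with coordinate algebra isomorphic to that of $\hat\ll$. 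Granting these, the chain $\ll_1\xleftarrow{\,p_1\,}\hat\ll\xrightarrow{\,p_2\,}\ll_2$ transports the $R$-grading onto $\ll_2$ and produces $\aa_{\ll_1}\cong\aa_{\hat\ll}\cong\aa_{\ll_2}$.

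For (A) I would first lift the grading pair. As $\ffg$ is a finite-dimensional simple Lie algebra over a field of characteristic $0$, it is centrally closed, so the central extension $p^{-1}(\ffg)\to\ffg$ splits and $\ffg$ embeds in $\hat\ll$; fix a Cartan $\fh$ inside this copy. Decomposing $\hat\ll$ into $\ad\fh$-eigenspaces, every central element is $\ad\fh$-invariant and hence sits in degree $0$, so $p$ restricts to a linear isomorphism $\hat\ll_\a\xrightarrow{\sim}\ll_\a$ for every $\a\in R^\times$ and the set of nonzero weights of $\hat\ll$ is exactly $R$. This gives RG1 and RG2 for $\hat\ll$. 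Because the Chevalley group and the maps $\Ad(\eta)$ and $\theta_{\b,\a}$ of (\ref{lam}) are built from the lifted copy of $\ffg$, they are intertwined by $p$ in every nonzero degree; since the multiplication (\ref{mul}) is read off in degree $\b+\gamma\in R^\times$ (an $A_2$-pair has nonzero sum), the isomorphisms $p|_{\hat\ll_\a}$ assemble into an isomorphism of coordinate algebras compatible with Theorem \ref{thm0}.

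The remaining point, RG3 for $\hat\ll$, i.e. $\hat\ll_0=\sum_{\a\in R^\times}[\hat\ll_\a,\hat\ll_{-\a}]$, is the main obstacle. Writing $M$ for the right-hand side we have $M\sub\hat\ll_0$, and RG3 for $\ll$ gives $p(M)=\ll_0=p(\hat\ll_0)$, so $\hat\ll_0=M+Z$ and everything reduces to $Z\sub M$. This is where the hypothesis on $R$ enters: viewing $\hat\ll$ as an integrable $\ffg$-module, the simply-laced condition forces the only irreducible constituents with weights in $R\cup\{0\}$ to be the adjoint and the trivial modules; consequently $\hat\ll$ is generated as a Lie algebra by its nonzero root spaces, and the central kernel $Z$ of the universal cover is already realized inside the degree-$0$ brackets $[\hat\ll_\a,\hat\ll_{-\a}]$ (the Steinberg-type central terms arise precisely from such brackets), whence $Z\sub M$. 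Part (B) is then routine: $Z\sub\hat\ll_0$ so the grading descends to $L$ with grading pair the image of $\ffg$ (injective since $\ffg$ is simple and $Z$ central), RG3 descends because $\sum[L_\a,L_{-\a}]=p(\sum[\hat\ll_\a,\hat\ll_{-\a}])=p(\hat\ll_0)=L_0$, and $p$ is an isomorphism in each nonzero degree so $\aa$ is preserved. Combining (A) and (B) yields the claim.
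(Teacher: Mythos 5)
You should first know how this compares to the paper: the paper gives \emph{no} proof of this proposition at all --- it is imported verbatim from \cite{BM92}*{1.29} --- so your attempt can only be measured against the standard argument, which, like yours, routes through the shared universal covering algebra. Your overall architecture is the right one, and most of the details are sound: lifting $\ffg$ into $\hat{\ll}$ using that a finite-dimensional simple Lie algebra in characteristic $0$ has no nontrivial central extensions, the observation that $p$ restricts to an isomorphism $\hat{\ll}_\a\to\ll_\a$ for $\a\in R^\times$, the compatibility of the maps $\theta_{\b,\a}$ with $p$, the remark that the multiplication (\ref{mul}) is read off in the nonzero degree $\b+\gamma$ so that the coordinate algebra is insensitive to the central kernel, and all of part (B).

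The genuine gap is at the crux, the verification of (RG3) for $\hat{\ll}$, i.e.\ $Z\sub M:=\sum_{\a\in R^\times}[\hat{\ll}_\a,\hat{\ll}_{-\a}]$. Your chain ``simply-laced $\Rightarrow$ only adjoint and trivial constituents $\Rightarrow$ $\hat{\ll}$ is generated by its nonzero root spaces $\Rightarrow$ $Z\sub M$'' fails at the middle implication: trivial constituents sit in degree $0$, and nothing in the constituent analysis places them inside the subalgebra generated by the nonzero root spaces --- indeed $Z$ itself is a sum of trivial constituents, and whether it lies in that subalgebra is precisely the assertion being proved, so the argument is circular; the parenthetical about ``Steinberg-type central terms'' is true for the known models but presupposes the structure theory rather than establishing the claim. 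The correct (and simpler) reason is perfectness of the universal central extension: set $S:=M+\sum_{\a\in R^\times}\hat{\ll}_\a$ and check that $S$ is a subalgebra (one needs $[M,M]\sub M$ and $[M,\hat{\ll}_\gamma]\sub\hat{\ll}_\gamma$, both immediate from the Jacobi identity and the weight grading); since $p(S)=\ll$ by (RG3) for $\ll$, we get $\hat{\ll}=S+Z$, hence $[\hat{\ll},\hat{\ll}]=[S,S]\sub S$, and perfectness of $\hat{\ll}$ forces $\hat{\ll}=S$, i.e.\ $\hat{\ll}_0=M$ and $Z\sub M$. Note this uses no simply-laced hypothesis at all; that hypothesis is only needed so that the coordinate algebra of Theorem \ref{thm0} is defined. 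A second, smaller unjustified step: you decompose $\hat{\ll}$ into $\ad\fh$-eigenspaces without argument; this requires the extension $0\to Z\to\hat{\ll}\to\ll\to 0$ to split as $\ffg$-modules, which does hold because $\ll$ is a locally finite $\ffg$-module with weights in the finite set $R$ and $\mathrm{Ext}^1$ of finite-dimensional modules by trivial ones vanishes (Whitehead's lemma), but it should be said. With these two repairs your proof is complete.
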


\subsection{Chevalley involution for $(R,\Lam)$-graded Lie algebras}
Let $\Lam$ be a free abelian group and as before, let $R$ be an irreducible reduced finite root system. First, we recall from \cite{Yos01} the notion of an $(R,\Lam)$-graded Lie algebra.

\begin{DEF}\label{root-graded-com}
    We say an $R$-graded Lie algebra $\ll$ with grading pair $(\ffg,\fh)$ admits a compatible $\Lam$-grading or $\ll$ is {\it $(R,\Lam)$-graded} if $\ll=\bigoplus_{\lam\in\Lam}\ll^\lam$ is a $\Lam$-graded Lie algebra so that $\ffg\subseteq\ll^0$ and $\langle\supp(\ll)\rangle=\Lam$, where $\supp(\ll)=\{\lam\in\Lam\mid\ll^\lam\neq\{0\}\}$.
\end{DEF}

Note that since $\ffg\subseteq\ll^0$, each $\ll^\lam$ is an $\fh$-module and then we get $\ll=\bigoplus_{\a\in R,\lam\in\Lam}\ll^\lam_\a$, where $\ll^\lam_\a:=\ll^\lam\cap\ll_\a$ for $\lam\in\Lam$ and $\a\in R$.

\begin{DEF}\label{def1}
    An $(R,\Lam)$-graded Lie algebra $\ll$ is called

    \begin{enumerate}[\upshape (i)]
        \item {\it predivision} if for each $\a\in R^\times$ {with} $\ll_\a^\lam\neq(0)$, there exist $e\in\ll_\a^\lam$ and $f\in\ll_{-\a}^{-\lam}$ such that
        $$[[e,f],x_\b]=\langle\b,\a^\vee\rangle x_\b,$$
        for $\b\in R,x_\b\in\ll_\b$. Here $\a^\vee$ is the coroot of $\a$ in the sense of \cite{Bou68} and $\langle\b,\a^\vee\rangle$ is the corresponding Cartan integer; 
        
        \item {\it division} if for each $\a\in R^\times,\lam\in\Lam$ and $0\neq e\in\ll_\a^\lam$ there exists $f\in\ll_{-\a}^{-\lam}$ such that
        $$[[e,f],x_\b]=\langle\b,\a^\vee\rangle x_\b,$$
        for $\b\in R,x_\b\in\ll_\b$.
    \end{enumerate}
\end{DEF}

\begin{rem}\label{rem1}
    (i) We note that in Definition \ref{def1}, (ii) implies (i), namely any division $(R,\Lam)$-graded Lie algebra is predivision. Also, when $\dim_\K(\ll_\a^\lam)\leq1$ for all $\a\in R^\times$, these two concepts coincide.

    (ii) In \cite[Proposition 2.13]{Yos01} for predivision $(R,\Lam)$-graded Lie algebras of simply-laced type with $\text{rank} R\geq3$, and in \cite[Proposition 6.3]{Yos02} for division $(A_2,\Lam)$-garded Lie algebras, Yoshii showed that their coordinate algebras are naturally $\Lam$-graded. Moreover, he classified these Lie algebras up to central extensions when $\Lam=\Z^n$ (see \cite[\S 4, Corollary]{Yos01} and \cite[Theorem 6.4]{Yos02}).
\end{rem}

\begin{exa}\label{ex2}
    (i) Any $R$-graded Lie algebra $\ll$ is a predivision $(R,\Lam)$-graded Lie algebra with $\Lam=\{0\}$.

    (ii) The (centerless) core of an extended affine Lie algebra $E$ of reduced type $R$ with nullity $n$ is a division $(R,\Z^n)$-graded Lie algebra; for details see \cite[Example 2.8 (b)]{Yos01}.

    (iii) Assume that $\aa=\bigoplus_{\lam\in\Lam}\aa^\lam$ is a $\Lam$-graded commutative associative algebra with $\langle\supp(\aa)\rangle=\Lam$, and $\ffg$ is a finite-dimensional simple Lie algebra with Cartan subalgebra $\fh$ and root system $R$. Let $\ll:=\ffg\otimes_\K\aa$. Then $\ll=\bigoplus_{\a\in R}(\ffg_\a\otimes_\K\aa)$ is an $R$-graded Lie algebra with grading pair $(\ffg\otimes1,\fh\otimes1)$. Next, let $\ll^\lam:=\ffg\otimes_\K\aa^\lam$, for all $\lam\in\Lam$.
    Then $\ll=\bigoplus_{\lam\in\Lam}\ll^\lam$ is a $\Lam$-graded Lie algebra such that $\ffg\otimes1\sub\ll^0$, and $\supp(\ll)=\supp(\aa)$. Thus, the $\Lam$-grading is compatible and makes $\ll$ an $(R,\Lam)$-graded Lie algebra.
\end{exa}

{Let $\aa$ be an algebra over $\bbbk$. In the following, by an {\it (anti)-involution} $\bar{\:\:}$ of $\aa$ we mean a $\bbbk$-linear map $\bar{\:\:}:\aa\rightarrow\aa$ satisfying: 
\begin{eqnarray*}	
    &&\overline{ab}=\overline{b}\overline{a},\\	
    &&\overline{\overline{a}}=a,
\end{eqnarray*}
for all $a,b\in\aa$. In addition, an automorphism $\tau$ of a Lie algebra $\ll$ over $\bbbk$ is called an {\it involution} if $\tau^2=\id$.} In \cite{AI23}, we introduced the notion of a Chevalley involution for a Lie torus to study Chevalley involutions of extended affine Lie algebras. Here, we generalize this notion for an $(R,\Lam)$-graded Lie algebra; see Remark \ref{rem2}.

\begin{DEF}\label{chev-inv}
    Let $\aa=\bigoplus_{\lam\in\Lam}\aa^\lam$ be a $\Lam$-graded algebra and $\ll=(\ll,\ffg,\fh)$ be an $(R,\Lam)$-graded Lie algebra.
    
    \begin{enumerate}[\upshape (i)]
        \item We call an (anti)-involution $\bar{\:\:}$ of $\aa$, a {\it pre-Chevalley (anti)-involution}, if $\overline{\aa^\lam}=\aa^{-\lam}$ for all $\lam\in\Lam$;
        
        \item An involution $\tau$ of $\ll$ is called a {\it Chevalley involution}, if $\tau$ preserves $\ffg$, $\tau(\ll^\lam)=\ll^{-\lam}$ for all $\lam\in\Lam$, and $\tau(x)=-x$ for all $x\in\ll^0_0$.
    \end{enumerate}
\end{DEF}

\begin{exa}\label{ex3}
    (i) Let $\aa$, $\ffg$ and $\ll=\ffg\otimes_\K\aa$ be as in Example \ref{ex2}(iii). We further suppose that $\aa$ is equipped with a pre-Chevalley (anti)-involution $\bar{\:\:}$ and $\sg$ is a Chevalley involution of $\ffg$. We define $\tau:\ll\rightarrow\ll$ by 
    $$\tau(x\otimes a)=\sg(x)\otimes\bar{a},$$
    for all $x\in\ffg$ and $a\in\aa$. Clearly $\tau$ is an involution of $\ll$ with $\tau(\ll^\lam)=\ll^{-\lam}$, for all $\lam\in\Lam$. Moreover, $\tau(\ffg\otimes1)=\ffg\otimes1$ and $\tau_{|_{\ll^0_0}}=-\id$. So $\tau$ is a Chevalley involution of $\ll=\ffg\otimes_\K\aa$.

    (ii) Assume that $\bbbf$ is a field extension of $\K$ and let $\aa={\bbbf}[x_1^{\pm1},\ldots,x_n^{\pm1}]$ be the algebra of Laurent polynomials over $\bbbf$ in $n$ variables. We note that $\aa=\bigoplus_{\lam\in\Z^n}\bbbf x^\lam$ is a $\Z^n$-graded algebra, where $x^\lam=x_1^{\lam_1}\cdots x_ n^{\lam_ n}$ for $\lam=(\lam_1,\ldots,\lam_n)\in\Z^n$. Then $\ll=\ffg\otimes_\K\aa$ is a division $(R,\Z^n)$-graded Lie algebra; see \cite[\S 4, Corollary(ii)]{Yos01}. One easily sees that the assignment $x_j\mapsto x_j^{-1},1\leq j\leq n$, induces a pre-Chevalley (anti)-involution $\bar{\:\:}$ on $\aa$. Thus, as defined in part (i), $\tau$ is a Chevalley involution of $\ll=\ffg\otimes_\K \bbbf[x_1^{\pm1},\ldots,x_n^{\pm1}]$.
\end{exa}

Assume that $\ll=(\ll,\ffg,\fh)$ is an $(R,\Lam)$-graded Lie algebra. Let $\tau$ be a Chevalley involution of $\ll$. Since $\fh\subseteq\ll_0^0$, then $\tau_{|_\fh}=-\id$. So $\tau_{|_{\ffg}}$ is a Chevalley involution of the finite-dimensional simple Lie algebra $\ffg$. Next, one can choose a Chevalley basis $\{e_\a,h_i\mid e_\a\in\ffg_\a,\a\in R^\times,h_i\in\fh,1\leq i\leq\ell\}$ of $\ffg$ so that $\tau(e_\a)=-e_{-\a}$.

\begin{con}
    In the remaining, whenever $\ll$ is equipped with a Chevalley involution $\tau$, we fix the Chevalley basis of $\ffg$ corresponding to $\tau_{|_{\ffg}}$, chosen as above.
\end{con}

\subsection{Lie tori}
Yoshii introduced the notion of a Lie torus in \cite{Yos06} to characterize extended affine Lie algebras. In fact, the centerless core of any EALA is a centerless Lie torus, and
conversely, any centerless Lie torus is the centerless core of an EALA. Neher further studied Lie tori in \cite{Neh04-1}.

\begin{DEF}\label{Lie-torus}
    A division $(R,\Z^n)$-graded Lie algebra $\ll$ is called a {\it Lie $(R,\Z^n)$-torus} or a {\it Lie $n$-torus} if $\dim_\K(\ll_\a^\lam)\leq1$ for all $\a\in R^\times$ and $\lam\in\Z^n$. When $R$ and $n$ are fixed, we simply say that $\ll$ is a Lie torus, and call $n$ the \textit{nullity} of $\ll$.
\end{DEF}

The Lie torus $\ll$ is called {\it invariant}, if $\ll$ has an invariant non-degenerate symmetric bilinear form $\fm$ which is graded, meaning that
$$ (\ll^\lam_\a,\ll^\mu_\b)=0\quad\hbox{unless}\quad(\a,\lam)=(-\b,-\mu).$$

\begin{exa}
    In Example \ref{ex3}, let $\aa=\K[x_1^{\pm1},\ldots,x_n^{\pm1}]$. Then $\ll=\ffg\otimes_\K\aa$ is a Lie $(R,\Z^n)$-torus with Chevalley involution $\tau$.
\end{exa}

Based on the structure theory of root-graded Lie algebras, the coordinatization theorems for centerless Lie tori of reduced types have been established in \cite{BGK96},\cite{BGKN95}, \cite{Yos00} and \cite{AG01}. In essence, a centerless Lie $n$-torus can be formed as a matrix Lie algebra coordinatized by an $n$-torus. Depending on the type, it can be a unital associative, Jordan, or alternative algebra. We will now review the definition of an $n$-torus.

\begin{DEF}\label{def7}
    Let $\aa=\bigoplus_{\lam\in\Z^n}\aa^\lam$ be a $\Z^n$-graded algebra over $\bbbk$. $\aa$ is called an {\it $n$-torus} if

    \begin{enumerate}[\upshape (i)]
        \item $\dim_\bbbk \aa^\lam\leq1$, for all $\lam\in\Z^n$, and each non-zero $x\in \aa^\lam$ is invertible,
        
        \item $\supp_\Lam(\aa)$ generates $\Lam$, where $\supp_\Lam(\aa)=\{\lam\in\Z^n\mid \aa^\lam\neq\{0\}\}$.
    \end{enumerate}
    If an $n$-torus $\aa$ is a Jordan algebra, alternative algebra, or associative algebra then $\aa$ is called a {\it Jordan, alternative, or associative $n$-torus}, respectively.
\end{DEF}

\begin{rem}
    In \cite{AI23}, we studied the Chevalley involutions for Lie tori of reduced types. For more examples of Chevalley involutions of Lie tori, we refer the reader to this article. There, we observed that the Chevalley involutions of Lie tori are tied with the Chevalley involutions of finite-dimensional simple Lie algebras and the pre-Chevalley (anti)-involution of their coordinate algebras which are $n$-tori. Through this, we showed the existence of Chevalley involutions for all centerless Lie tori of reduced types except for type $A_\ell$, $\ell\geq 2$. The main goal of this work is to investigate the case $A_\ell$, $\ell\geq 2$, which will be addressed in the final section.
\end{rem}

\section{Compatible involutions}\label{sec3}

Assume as before that $R$ is an irreducible reduced finite root system and $\Lam$ is a free abelian group. First, we recall from \cite{Gao96} the notion of an involution of an $R$-graded Lie algebra $\ll$ compatible with the $R$-grading. Then we show that any Chevalley involution of an $(R,\Lam)$-graded Lie algebra $\ll$ is compatible with its $R$-grading. Consequently, when $R$ is simply-laced of rank $\geq2$, applying the results of \cite[$\S2$]{Gao96} to our setting we get that any Chevalley involution of an $(R,\Lam)$-graded Lie algebra induces a pre-Chevalley (anti)-involution on its coordinate algebra. 
 
\begin{DEF}\label{cam}
Let $\ll$ be an $R$-graded Lie algebra. An involution $\sg$ of $\ll$ is called {\it compatible} with the $ R$-grading if
	
    \begin{enumerate}[\upshape (i)]
        \item $\sg(\ll_\a)\subseteq\ll_{-\a}$, and
        
        \item $\sg\Ad\eta_\a(1)=\Ad\eta_\a(1)\sg$,
    \end{enumerate}
for all $\a\in R^\times$.
\end{DEF}

\begin{lem}\label{151}
    Assume that $\ll=(\ll,\ffg,\fh)$ is an $(R,\Lam)$-graded Lie algebra with an involution $\tau$ such that $\tau(\ll^\lam)=\ll^{-\lam}$ for all $\lam\in\Lam$, and $\tau_{|_{\ll_0^0}}=-\id$. Then $\tau(\ll_\a)=\ll_{-\a}$, for all $\a\in R$.
\end{lem}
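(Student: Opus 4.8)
The plan is to use only two properties of $\tau$: that it is a Lie algebra automorphism, and that it acts as $-\id$ on the Cartan subalgebra $\fh$. First I would record that $\fh\subseteq\ll_0^0$. Indeed, $\fh\subseteq\ffg\subseteq\ll^0$ by the definition of an $(R,\Lam)$-graded Lie algebra, and since $\fh$ is abelian we have $[h',h]=0$ for all $h,h'\in\fh$, so $\fh\subseteq\ll_0$; hence $\fh\subseteq\ll^0\cap\ll_0=\ll_0^0$. The hypothesis $\tau_{|_{\ll_0^0}}=-\id$ then gives $\tau(h)=-h$ for every $h\in\fh$.

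Next, I would establish the inclusion $\tau(\ll_\a)\subseteq\ll_{-\a}$ by a direct eigenvalue computation. Fix $\a\in R$ and $x\in\ll_\a$, so that $[h,x]=\a(h)x$ for all $h\in\fh$. Applying the automorphism $\tau$ and using $\tau(h)=-h$,
$$\a(h)\tau(x)=\tau([h,x])=[\tau(h),\tau(x)]=-[h,\tau(x)],$$
whence $[h,\tau(x)]=-\a(h)\tau(x)=(-\a)(h)\tau(x)$ for all $h\in\fh$. This says precisely that $\tau(x)\in\ll_{-\a}$, and therefore $\tau(\ll_\a)\subseteq\ll_{-\a}$.

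Finally, I would upgrade this one-sided inclusion to an equality using $\tau^2=\id$. Applying the inclusion just proved with $-\a$ in place of $\a$ gives $\tau(\ll_{-\a})\subseteq\ll_\a$; applying $\tau$ once more and using $\tau^2=\id$ yields $\ll_{-\a}=\tau\bigl(\tau(\ll_{-\a})\bigr)\subseteq\tau(\ll_\a)$. Combining the two inclusions gives $\tau(\ll_\a)=\ll_{-\a}$ for all $\a\in R$, as claimed.

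This argument is essentially formal, so I do not anticipate a genuine obstacle. The only points requiring a little care are the identification $\fh\subseteq\ll_0^0$, which is what lets one read off $\tau_{|_\fh}=-\id$ from the stated hypothesis, and the use of $\tau^2=\id$ to convert the inclusion $\tau(\ll_\a)\subseteq\ll_{-\a}$ into an equality. I would remark that the $\Lam$-grading compatibility $\tau(\ll^\lam)=\ll^{-\lam}$ is not actually needed for this particular statement, although it is part of the Chevalley-involution data that becomes relevant in the subsequent results.
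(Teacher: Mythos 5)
Your proposal is correct and follows essentially the same route as the paper: both rest on the observation $\fh\subseteq\ll_0^0$ (so $\tau_{|_\fh}=-\id$) and the eigenvalue computation $[h,\tau(x)]=\tau([-h,x])=-\a(h)\tau(x)$. The paper states the equality $\tau(\ll_\a)=\ll_{-\a}$ directly, leaving implicit the step you make explicit via $\tau^2=\id$; your version is slightly more careful, and your remark that the $\Lam$-grading hypothesis is not needed here is accurate.
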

\begin{proof}
    Note that $\fh\subseteq\ll_0^0$ and since $\tau_{|_{\ll_0^0}}=-\id$ we have
    $$[h,\tau(x)]=\tau([-h,x])=-{\a}(h)\tau(x),$$
    for $\a\in {R},x\in\ll_\a$ and $h\in\fh$. So $\tau(\ll_\a)=\ll_{-\a}$, for $\a\in R$.
\end{proof}

\begin{pro}\label{pro6}
    Let $\ll=(\ll,\ffg,\fh)$ be an $(R,\Z^n)$-graded Lie torus with an involution $\tau$ such that $\tau(\ll^\lam)=\ll^{-\lam}$ for all $\lam\in\Lam$, and $\tau_{|_{\ll_0^0}}=-\id$. Then $\tau(\ffg)\subseteq\ffg$. In particular, $\tau$ is a Chevalley involution of $\ll$.
\end{pro}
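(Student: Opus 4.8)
The plan is to reduce everything to a single dimension-counting observation that is special to Lie tori. Since the hypotheses placed on $\tau$ here are precisely those of Lemma \ref{151}, I would first invoke that lemma to obtain $\tau(\ll_\a)=\ll_{-\a}$ for every $\a\in R$. The whole statement then hinges on proving $\tau(\ffg)\sub\ffg$: once this is established, the three requirements of Definition \ref{chev-inv}(ii) are immediate, because $\tau$ preserves $\ffg$ (the inclusion upgrades to equality since $\tau^2=\id$), the condition $\tau(\ll^\lam)=\ll^{-\lam}$ is a hypothesis, and $\tau(x)=-x$ on $\ll_0^0$ is just the hypothesis $\tau_{|_{\ll_0^0}}=-\id$.

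Next I would combine the two grading conditions. As $\tau$ is a bijection and $\ll_\a^\lam=\ll_\a\cap\ll^\lam$, the identity $\tau(A\cap B)=\tau(A)\cap\tau(B)$ together with $\tau(\ll_\a)=\ll_{-\a}$ and $\tau(\ll^\lam)=\ll^{-\lam}$ yields $\tau(\ll_\a^\lam)=\ll_{-\a}^{-\lam}$ for all $\a\in R$ and $\lam\in\Z^n$. The case $\lam=0$ is the one I need: $\tau(\ll_\a^0)=\ll_{-\a}^0$.

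The key step, and the only place the Lie torus hypothesis is genuinely used, is to identify $\ll_\a^0$ with the root space $\ffg_\a$ for $\a\in R^\times$. Since $\ffg$ is finite-dimensional simple with root system $R$, each $\ffg_\a$ is one-dimensional, and as $\ffg\sub\ll^0$ we have $\ffg_\a\sub\ll_\a\cap\ll^0=\ll_\a^0$. But the defining axiom of a Lie torus (Definition \ref{Lie-torus}) forces $\dim_\bbbk\ll_\a^0\leq1$, so the one-dimensional subspace $\ffg_\a$ must fill it: $\ll_\a^0=\ffg_\a$. Applying $\tau$ and using the previous paragraph gives $\tau(\ffg_\a)=\ll_{-\a}^0=\ffg_{-\a}\sub\ffg$. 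For the toral part, $\fh\sub\ll_0^0$ and $\tau_{|_{\ll_0^0}}=-\id$ give $\tau(\fh)=\fh\sub\ffg$. Since $\ffg=\fh\oplus\bigoplus_{\a\in R^\times}\ffg_\a$, we conclude $\tau(\ffg)\sub\ffg$, which completes the argument.

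I expect the main (indeed only) obstacle to be conceptual rather than computational: recognizing that the dimension bound $\dim_\bbbk\ll_\a^\lam\leq1$ is exactly what collapses $\ll_\a^0$ onto $\ffg_\a$. For a general $(R,\Lam)$-graded Lie algebra the space $\ll_\a^0$ can strictly contain $\ffg_\a$, and then $\tau$ need not preserve $\ffg$; so this is the precise point at which the hypothesis that $\ll$ is a torus (and not merely $(R,\Z^n)$-graded) does the essential work.
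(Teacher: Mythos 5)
Your proposal is correct and follows essentially the same route as the paper's own (much terser) proof: both rest on the observation that $\ffg_\a\subseteq\ll_\a^0$ together with the Lie torus bound $\dim_\bbbk\ll_\a^0\leq1$ forces $\ll_\a^0=\ffg_\a$, and then apply Lemma \ref{151} and the $\Lambda$-grading hypothesis to get $\tau(\ffg_\a)=\ffg_{-\a}$. Your write-up merely makes explicit two steps the paper leaves implicit --- the identity $\tau(\ll_\a^\lam)=\ll_{-\a}^{-\lam}$ and the treatment of the toral part $\fh$ --- which is a faithful expansion rather than a different argument.
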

\begin{proof}
    Since $\ffg_\a\subseteq\ll_\a^0$ and $\dim_\K(\ll_\a^0)\leq1$, for all $\a\in R^\times$, it follows from Lemma \ref{151} that $\tau(\ffg)\subseteq\ffg$ and so $\tau$ is a Chevalley involution of $\ll$.
\end{proof}

\begin{rem}\label{rem2}
    We note that by Proposition \ref{pro6}, Definition \ref{chev-inv} for a Lie torus $\ll$ coincides with \cite[Definition 3.1.1]{AI23}.
\end{rem}

\begin{pro}\label{pro3}
    Let $\ll=(\ll,\ffg,\fh)$ be an $(R,\Lam)$-graded Lie algebra with a Chevalley involution $\tau$. Then $\tau$ is campatible with the $ R$-grading.
\end{pro}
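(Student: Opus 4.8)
The plan is to verify the two defining conditions of Definition~\ref{cam} for the Chevalley involution $\tau$. Condition (i), that $\tau(\ll_\a)\subseteq\ll_{-\a}$ for every $\a\in R^\times$, is immediate: by definition of a Chevalley involution we have $\tau(\ll^\lam)=\ll^{-\lam}$ for all $\lam\in\Lam$ and $\tau_{|_{\ll_0^0}}=-\id$, so Lemma~\ref{151} applies and gives $\tau(\ll_\a)=\ll_{-\a}$ for all $\a\in R$.

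The substance is condition (ii), namely $\tau\,\Ad\eta_\a(1)=\Ad\eta_\a(1)\,\tau$. The key input is the normalization fixed in the Convention preceding the statement: the chosen Chevalley basis satisfies $\tau(e_\a)=-e_{-\a}$, and hence also $\tau(e_{-\a})=-e_{\a}$, for all $\a\in R^\times$. I would combine this with the elementary conjugation rule for an automorphism $\phi$ of $\ll$ and an element $x$ with $\ad x$ locally nilpotent (which holds for $x=e_{\pm\a}$, as $\ll$ is an integrable $\ffg$-module), namely $\phi\exp(\ad x)\phi^{-1}=\exp(\ad\phi(x))$. Since $\Ad\eta_\a(1)=\exp(\ad e_\a)\exp(-\ad e_{-\a})\exp(\ad e_\a)$ and $\tau^{-1}=\tau$, inserting $\tau\tau=\id$ between the three factors and applying the conjugation rule converts each $\a$-factor into the corresponding $-\a$-factor. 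A short computation then gives
\[
\tau\,\Ad\eta_\a(1)\,\tau=\exp(-\ad e_{-\a})\exp(\ad e_\a)\exp(-\ad e_{-\a}).
\]
By the defining formula for $\eta$, the right-hand side is exactly $\Ad\eta_{-\a}(-1)$ (here $e_{-(-\a)}=e_\a$ and $-(-1)^{-1}=1$), so $\tau\,\Ad\eta_\a(1)\,\tau=\Ad\eta_{-\a}(-1)$.

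It then remains to identify $\Ad\eta_{-\a}(-1)$ with $\Ad\eta_\a(1)$, which I expect to be the main obstacle. This follows from the standard rank-one Chevalley-group relation $\eta_{-\a}(t)=\eta_\a(-t^{-1})$, which already holds in the copy of $\SL$ generated by $\{\exp(te_\a),\exp(te_{-\a})\mid t\in\bbbk\}$ and therefore in $G$; specializing at $t=-1$ gives $\eta_{-\a}(-1)=\eta_\a(1)$ in $G$, and applying the homomorphism $\Ad\colon G\to\Aut\ll$ yields $\Ad\eta_{-\a}(-1)=\Ad\eta_\a(1)$ on all of $\ll$. Combining the two steps gives $\tau\,\Ad\eta_\a(1)\,\tau=\Ad\eta_\a(1)$, equivalently $\tau\,\Ad\eta_\a(1)=\Ad\eta_\a(1)\,\tau$, which is condition (ii). The delicate point is precisely this last identification: because $\tau$ sends the $\a$-generators to the $-\a$-generators, conjugation does not return $\Ad\eta_\a(1)$ on the nose, and one must invoke the correct rank-one group identity to close the loop; everything else is a formal manipulation of exponentials of locally nilpotent derivations.
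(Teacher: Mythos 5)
Your proof is correct, and up to the final step it coincides with the paper's own argument (which mimics Gao's Lemma 2.3): you establish condition (i) via Lemma~\ref{151} exactly as the paper does, and for condition (ii) you conjugate each factor of $\Ad\eta_\a(1)=\exp(\ad e_\a)\exp(\ad(-e_{-\a}))\exp(\ad e_\a)$ by $\tau$ using $\phi\exp(\ad x)\phi^{-1}=\exp(\ad\,\phi(x))$ together with the normalization $\tau(e_{\pm\a})=-e_{\mp\a}$, arriving, just as the paper does, at $\tau\,\Ad\eta_\a(1)\,\tau=\exp(\ad(-e_{-\a}))\exp(\ad e_\a)\exp(\ad(-e_{-\a}))$. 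You part ways only in how this operator is identified with $\Ad\eta_\a(1)$. The paper stays inside $\ll$: it rewrites the product as $\exp(\ad(-e_{-\a}))\,\Ad\eta_\a(1)\,\exp(\ad(-e_\a))$, commutes $\Ad\eta_\a(1)$ past the last factor via the same conjugation rule, and computes $\Ad\eta_\a(1)(-e_\a)=e_{-\a}$ by an explicit calculation in the $\mathfrak{sl}_2$-triple $(e_\a,h_\a,e_{-\a})$, after which the extreme exponentials cancel. You instead recognize the product as $\Ad\eta_{-\a}(-1)$ and invoke the rank-one relation $\eta_{-\a}(t)=\eta_\a(-t^{-1})$ at $t=-1$. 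This is valid: the relation is an explicit $2\times2$ check in $\SL$, hence holds in any homomorphic image, so it passes to the operators on $\ll$ via $\Ad$ (as $\ll$ is an integrable $\ffg$-module) whether or not the rank-one subgroup is an exact copy of $\SL$ --- so your phrase ``the copy of $\SL$'' needs no injectivity claim, although in the universal Chevalley group it is indeed a copy. What your route buys is brevity and a conceptual explanation of why the identification must hold; what the paper's computation buys is self-containedness, since it never appeals to any Chevalley-group relation beyond the definition of $\eta_\a(t)$ and a four-line $\mathfrak{sl}_2$ computation.
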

\begin{proof}
By Lemma \ref{151}, we have $\tau(\ll_\a)=\ll_{-\a}$, for all $\a\in R$, so it remains to show that condition (ii) of Definition \ref{cam} holds for $\tau$. To complete the proof, we now mimic the argument given in \cite[Lemma 2.3]{Gao96}. First, we note that if $\phi$ is an automorphism of $\ll$, then $\phi\exp\ad x\phi^{-1}=\exp\ad(\phi(x))$
for all ad-nilpotent $x\in\ll$. Then we have
\begin{eqnarray*}
    \tau\Ad\eta_\a(1)\tau^{-1}&=&\tau(\exp\ad e_\a)\tau^{-1}\tau(\exp\ad(-e_{-\a}))\tau^{-1}\tau(\exp\ad e_\a)\tau^{-1}\\
    &=&(\exp\ad(\tau(e_\a)))(\exp\ad(\tau(-e_{-\a})))(\exp\ad(\tau(e_\a)))\\
    &=&(\exp\ad(-e_{-\a}))(\exp\ad e_\a)(\exp\ad(-e_{-\a}))\\
    &=&(\exp\ad(-e_{-\a}))\Ad\eta_\a(1)(\exp\ad(-e_{\a}))\\
    &=&(\exp\ad(-e_{-\a}))(\exp\ad(\Ad\eta_\a(1)(-e_{\a})))\Ad\eta_\a(1).
\end{eqnarray*}
On the other hand
\begin{eqnarray*}
    \Ad\eta_\a(1)(-e_{\a})&=&(\exp\ad e_\a)(\exp\ad(-e_{-\a}))(\exp\ad e_\a)(-e_{\a})\\
    &=&(\exp\ad e_\a)(\exp\ad(-e_{-\a}))(-e_{\a})\\
    &=&\exp\ad e_\a(-e_\a-h_\a+e_{-\a})\\
    &=&-e_\a+(-h_\a+2e_\a)+(e_{-\a}+h_\a-e_\a)\\
    &=&e_{-\a}.
\end{eqnarray*}
Thus, we get $\tau\Ad\eta_\a(1)\tau^{-1}=\Ad\eta_\a(1)$, as required.
\end{proof}

The following example shows that the converse of the previous result is not generally valid; specifically,  any involution of $\ll$ that is compatible with the $R$-grading is not necessarily a Chevalley involution.

\begin{exa}
    In Example \ref{ex2}(iii), let $\aa=\K[x_1^{\pm1},\ldots,x_n^{\pm1}]$ and consider $e_\a\in\ffg_\a$, $\a\in R^\times$, such that $\{e_\a\mid\a\in R^\times\}$ extends to a Chevalley basis for $\ffg$. Next, let $\bar{\:\:}:\aa\rightarrow\aa$ be the (anti)-involution induced by $x_j^{\pm 1}\mapsto -x^{\pm 1}_j,1\leq j\leq n$. Also, let $\sg$ be the involution on $\ffg$ induced by $\sg(e_\a)=-e_{-\a}$, $\a\in R^\times$. Then the assignment 
    $$\tau:e_\a\otimes a\mapsto\sg(e_{\a})\otimes\bar{a},\qquad (\a\in R^\times,a\in\aa)$$
    induces an involution of $\ll$ with $\tau(\ll_\a)=\ll_{-\a}$. Also, by the argument given in the proof of Proposition \ref{pro3}, $\tau$ commutes with all $\Ad\eta_\a(1)$, for $\a\in R^\times$. So $\tau$ is compatible with the $R$-grading. But $\tau$ is not a Chevalley involution for $\ll$, since $\tau(\ll^\lam)=\ll^\lam$, for all $\lam\in\Lam$.
\end{exa}

We proceed with recalling what we will need from \cite[$\S2$]{Gao96}. Assume that $ R$ is a simply-laced irreducible finite root system of rank $\geq2$. Let $\ll$ be an $ R$-graded Lie algebra with a compatible involution $\sg$ and $\aa$ be the coordinate algebra of $\ll$. As we have already seen, $\aa$ can be identified with $\ll_\a$ for any $\a\in R^\times$. We fix $\a\in R^\times$ and define a $\K$-linear map $\bar{\:\:}:\aa\rightarrow\aa$ by
\begin{equation}\label{anti}
	e_{-\a}(\bar a)=-\sg(e_\a(a)),
\end{equation}
for all $a\in\aa$. In fact, $\bar{\;}$ is independent of the choice of $\a$.
Since $\sg$ is an involution, $\bar{\bar a}=a$ for all $a\in\aa$. Moreover:

\begin{pro}[{\cite[Proposition 2.27]{Gao96}}]\label{pro5}
    The $\K$-linear map $\bar{\;}$ is an (anti)-involu\-tion of the $\K$-algebra $\aa$.
\end{pro}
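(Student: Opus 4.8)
The plan is to check the two defining clauses of an (anti)-involution separately. The relation $\overline{\overline a}=a$ is already available, since $\sg$ is an involution, so the entire task reduces to establishing the anti-multiplicativity $\overline{ab}=\bar b\,\bar a$, where $ab:=m_{(\b,\gamma)}(a,b)$ denotes the product defined through the fixed $A_2$-pair $(\b,\gamma)$ via (\ref{mul}). Throughout I would use the stated independence of $\bar{\;}$ on the chosen root, which amounts to $\sg(e_\delta(x))=-e_{-\delta}(\bar x)$ for every $\delta\in R^\times$ and every $x\in\aa$, a direct rewriting of (\ref{anti}).

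First I would record the auxiliary identity
\[
\sg\big([e_\b,e_\gamma](c)\big)=[e_{-\b},e_{-\gamma}](\bar c)\qquad(c\in\aa).
\]
This follows by writing $[e_\b,e_\gamma]=N_{\b,\gamma}e_{\b+\gamma}$, applying $\sg$, and invoking (\ref{anti}) at the root $\b+\gamma$, together with the standard Chevalley-basis relation $N_{-\b,-\gamma}=-N_{\b,\gamma}$, which gives $[e_{-\b},e_{-\gamma}]=-N_{\b,\gamma}e_{-\b-\gamma}$ so that the two scalars match. Note in particular $[e_{-\b},e_{-\gamma}]\neq0$, since $\b+\gamma\in R^\times$.

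The core step is then to apply $\sg$ to the defining relation $[e_\b(a),e_\gamma(b)]=[e_\b,e_\gamma](ab)$. Because $\sg$ is a Lie algebra homomorphism and $\sg(e_\delta(x))=-e_{-\delta}(\bar x)$, the left-hand side becomes $[e_{-\b}(\bar a),e_{-\gamma}(\bar b)]$, which by (\ref{mul}) applied to the $A_2$-pair $(-\b,-\gamma)$ equals $[e_{-\b},e_{-\gamma}]\big(m_{(-\b,-\gamma)}(\bar a,\bar b)\big)$; while the auxiliary identity turns the right-hand side into $[e_{-\b},e_{-\gamma}](\overline{ab})$. Since $[e_{-\b},e_{-\gamma}]\neq0$ and the coordinate insertion $c\mapsto[e_{-\b},e_{-\gamma}](c)$ is injective, I may cancel it and conclude $\overline{ab}=m_{(-\b,-\gamma)}(\bar a,\bar b)$.

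It remains to identify $m_{(-\b,-\gamma)}$, and this is where Theorem \ref{thm0} is decisive. In types $D$ and $E$ the algebra $\aa$ is commutative and the multiplication does not depend on the $A_2$-pair, so $m_{(-\b,-\gamma)}(\bar a,\bar b)=\bar a\bar b=\bar b\bar a$. In type $A$ the two equivalence classes of $A_2$-pairs give opposite multiplications, and the point is that $\mu\mapsto-\mu$ carries $(\b,\gamma)$ into the class of the swapped pair $(\gamma,\b)$: indeed the longest element $w_0$ of the $A_2$-subsystem, for which $\{\b,\gamma\}$ is a base, sends $\b\mapsto-\gamma$ and $\gamma\mapsto-\b$, so $w_0(\gamma,\b)=(-\b,-\gamma)$ and hence $m_{(-\b,-\gamma)}=m_{(\gamma,\b)}$. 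Using $m_{(\gamma,\b)}(\bar a,\bar b)=m_{(\b,\gamma)}(\bar b,\bar a)=\bar b\bar a$, I again obtain $\overline{ab}=\bar b\,\bar a$, which completes the proof. I expect this last point—verifying that negation interchanges the two classes of $A_2$-pairs in type $A$—to be the only genuine obstacle, everything else being formal bookkeeping with the defining relations; the cleanest way to settle it is the structural observation that $-1$ lies in the nontrivial coset of the Weyl group inside $\Aut$ of the $A_2$-subsystem and that this coset swaps the two classes.
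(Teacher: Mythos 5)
The paper does not actually prove Proposition \ref{pro5}: it is imported verbatim from \cite[Proposition 2.27]{Gao96}, so there is no internal argument to compare against, and your proposal is best judged as a reconstruction of Gao's proof --- which it essentially is, and it is correct. The skeleton is sound: $\overline{\overline a}=a$ comes for free from $\sg^2=\id$; your auxiliary identity $\sg\bigl([e_\b,e_\gamma](c)\bigr)=[e_{-\b},e_{-\gamma}](\bar c)$ is right, the Chevalley relation $N_{-\b,-\gamma}=-N_{\b,\gamma}$ exactly absorbing the minus sign from $\sg(e_{\b+\gamma}(c))=-e_{-\b-\gamma}(\bar c)$; and applying $\sg$ to (\ref{mul}) and cancelling the bijection $c\mapsto e_{-\b-\gamma}(c)$ legitimately yields $\overline{ab}=m_{(-\b,-\gamma)}(\bar a,\bar b)$. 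Two inputs deserve explicit flags. First, the root-independence $\sg(e_\d(x))=-e_{-\d}(\bar x)$ for all $\d\in R^\times$ is \emph{not} a formal rewriting of (\ref{anti}), which is stated only for the fixed root $\a$; it is a genuine lemma of Gao, and it is precisely where the compatibility condition of Definition \ref{cam}(ii) is consumed (without it the proposition is false). You are entitled to cite it, since the paper asserts it just before the proposition, but your proof is not independent of that fact. Second, your identification $m_{(-\b,-\gamma)}=m_{(\gamma,\b)}$ via $w_0(\gamma,\b)=(-\b,-\gamma)$ is correct ($w_0\b=-\gamma$, $w_0\gamma=-\b$ for the base $\{\b,\gamma\}$ of the $A_2$-subsystem), but it tacitly uses that $m$ is constant on $\w$-orbits of ordered $A_2$-pairs; this is how the equivalence classes in Theorem \ref{thm0} are meant in \cite{BM92}, and it can also be checked in one line: if $\eta$ lifts $w$, then $\Ad\eta(e_\mu(x))=\ep_\mu e_{w\mu}(x)$ where $\eta(e_\mu)=\ep_\mu e_{w\mu}$, and applying $\Ad\eta$ to (\ref{mul}) together with $\ep_\b\ep_\gamma N_{w\b,w\gamma}=\ep_{\b+\gamma}N_{\b,\gamma}$ gives $m_{(w\b,w\gamma)}=m_{(\b,\gamma)}$. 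With that one line made explicit, your argument is complete in all simply-laced types of rank $\geq2$, including $A_2$, where only the quoted identity $m_{(\gamma,\b)}(a,b)=m_{(\b,\gamma)}(b,a)$ from \cite[1.18]{BM92} is needed and alternativity plays no role.
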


\begin{rem}
    We note that the aforementioned result is actually founded on Berman and Moody's work, which includes type $A_2$. But in his article, Gao excluded this type. However, Proposition \ref{pro5} applies to type $A_2$.
\end{rem}

\begin{lem}\label{lem1}
    Let $\ll=(\ll,\ffg,\fh)$ be an $(R,\Lam)$-graded Lie algebra, where $ R$ is a simply-laced irreducible finite root system of rank $\geq2$. Then, for $\a,\b\in R^\times$ the map $\theta_{\b,\a}:\ll_\a\rightarrow\ll_\b$ defined by (\ref{lam}) preserves the $\Lam$-grading, i.e. $\theta_{\b,\a}(\ll_\a^\lam)=\ll_\b^\lam$, for all $\lam\in\Lam$.
\end{lem}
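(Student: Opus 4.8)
The plan is to unwind the definition of $\theta_{\b,\a}$ and to show that each ingredient respects the $\Lam$-grading. Recall from (\ref{lam}) that $\theta_{\b,\a}=\ep^{-1}\tilde\theta_{\b,\a}$, where $\tilde\theta_{\b,\a}=\Ad(\eta)|_{\ll_\a}$ for a suitable $\eta\in\mathcal N$ satisfying $\Ad(\eta)(\ll_\a)=\ll_\b$, and $\ep\in\K^\times$. Since multiplication by the nonzero scalar $\ep^{-1}$ maps each subspace $\ll_\b^\lam$ onto itself, it suffices to prove that $\tilde\theta_{\b,\a}(\ll_\a^\lam)=\ll_\b^\lam$, i.e. that $\Ad(\eta)(\ll_\a\cap\ll^\lam)=\ll_\b\cap\ll^\lam$ for every $\lam\in\Lam$.

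The crucial point I would establish is that $\Ad(\eta)$ preserves each homogeneous component $\ll^\lam$ of the $\Lam$-grading. By construction $\eta$ is a product of elements $\exp te_\gamma$ with $\gamma\in R^\times$ and $t\in\K$, so $\Ad(\eta)$ is the corresponding product of operators $\exp\ad(te_\gamma)$. Here is where the hypothesis $\ffg\sub\ll^0$ of an $(R,\Lam)$-graded Lie algebra enters: each root vector $e_\gamma$ lies in $\ffg_\gamma\sub\ll^0$, and since $\ll=\bigoplus_{\mu\in\Lam}\ll^\mu$ is $\Lam$-graded we have $[\ll^0,\ll^\lam]\sub\ll^\lam$, so $\ad(te_\gamma)$ maps $\ll^\lam$ into itself. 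Because $\ll$ is an integrable $\ffg$-module, $\ad e_\gamma$ is locally nilpotent, hence $\exp\ad(te_\gamma)$ is a finite sum of powers of $\ad(te_\gamma)$ and therefore also maps $\ll^\lam$ into itself. A short direct-sum argument, using that $\exp\ad(te_\gamma)$ is bijective on $\ll$ with inverse $\exp\ad(-te_\gamma)$, upgrades this to $\exp\ad(te_\gamma)(\ll^\lam)=\ll^\lam$, and taking the product over the factors of $\eta$ gives $\Ad(\eta)(\ll^\lam)=\ll^\lam$.

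I would then conclude by combining the two grading behaviours of the bijection $\Ad(\eta)$. Since $\Ad(\eta)$ is bijective on $\ll$, it commutes with intersections of subspaces, so
\[
\tilde\theta_{\b,\a}(\ll_\a^\lam)=\Ad(\eta)(\ll_\a\cap\ll^\lam)=\Ad(\eta)(\ll_\a)\cap\Ad(\eta)(\ll^\lam)=\ll_\b\cap\ll^\lam=\ll_\b^\lam,
\]
and reinstating the scalar yields $\theta_{\b,\a}(\ll_\a^\lam)=\ll_\b^\lam$, as required. I do not expect a genuine obstacle here: the argument is essentially bookkeeping, and the one step needing care is verifying that $\Ad(\eta)$ respects the $\Lam$-grading, which rests entirely on the Chevalley generators sitting in degree $0$ together with the local nilpotency that guarantees the exponentials are well defined.
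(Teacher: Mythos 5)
Your proposal is correct and takes essentially the same route as the paper: the paper's proof likewise reduces the lemma to the fact that $\Ad(\eta)$ respects the $\Lam$-grading, asserting $\Ad\eta_\d(1)(\ll_\gamma^\lam)=\ll^\lam_{w_\d(\gamma)}$ by appeal to an argument as in \cite{AABGP97}*{Proposition 1.27} or \cite{Yos04}*{Theorem 5.1}, and then concludes from the definition of $\theta_{\b,\a}$. The direct verification you supply---each Chevalley generator $e_\gamma$ lies in $\ffg_\gamma\subseteq\ll^0$, so the locally nilpotent $\ad(te_\gamma)$ and hence $\exp\ad(te_\gamma)$ preserve every $\ll^\lam$, and $\Ad(\eta)$, being injective, commutes with the intersection $\ll_\a\cap\ll^\lam$---is precisely the content of the argument the paper cites, so your write-up merely makes the same proof self-contained.
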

\begin{proof}
   An argument similar to \cite[Proposition 1.27]{AABGP97} or \cite[Theorem 5.1]{Yos04} shows that $$\Ad\eta_\delta(1)(\ll_{\gamma}^\lam)=\ll^{\lam}_{w_{\delta}(\gamma)},\qquad(\d,\gamma\in R^\times,\lam\in\Lam),$$
    where $w_{\delta}$ is the reflection based on $\delta$, as an element of the Weyl group of $ R$. Now, one clearly observes from the definition of $\theta_{\b,\a}$ that $\theta_{\b,\a}$ maps $\ll_{\a}^\lam$ onto $\ll_{\b}^\lam$.
\end{proof}

In the following, we obtain a necessary condition for a simply-laced $(R,\Lam)$-graded Lie algebra $\ll$ to admit a Chevalley involution. In the next section, this is essential for the proof of Theorem \ref{thm5} concerning involutive Lie tori of type $A$. Here, we need the coordinate algebra of $\ll$ to be a $\Lam$-graded algebra. So in this result, we consider division $(R,\Lam)$-graded Lie algebras; see Remark \ref{rem1}(ii).

\begin{pro}\label{pro1}
    Let $\ll=(\ll,\ffg,\fh)$ be a {division} $(R,\Lam)$-graded Lie algebra with a Chevalley involution $\tau$, where $ R$ is a simply-laced irreducible finite root system of rank $\geq2$. Then the coordinate algebra $\aa$ of $\ll$ admits a pre-Chevalley (anti)-involution.
\end{pro}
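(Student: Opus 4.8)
The plan is to take the (anti)-involution produced by Gao's construction and verify that it respects the $\Lam$-grading of $\aa$. First I would invoke Proposition \ref{pro3}: since $\tau$ is a Chevalley involution of $\ll$, it is compatible with the $R$-grading in the sense of Definition \ref{cam}. Thus $\tau$ plays the role of the compatible involution $\sg$ in Gao's setup, and Proposition \ref{pro5} applies verbatim (recall it is valid for type $A_2$ as well). Fixing $\a\in R^\times$, this yields a $\K$-linear (anti)-involution $\bar{\:\:}:\aa\rightarrow\aa$ of the coordinate algebra, determined by $e_{-\a}(\overline{a})=-\tau(e_\a(a))$ for all $a\in\aa$ and independent of the choice of $\a$. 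It then remains only to check that $\bar{\:\:}$ is \emph{pre-Chevalley}, i.e.\ that $\overline{\aa^\lam}=\aa^{-\lam}$ for every $\lam\in\Lam$.

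For this I would first make the $\Lam$-grading of $\aa$ explicit. Since $\ll$ is division, Remark \ref{rem1}(ii) guarantees that $\aa$ is naturally $\Lam$-graded; under the identification $\aa=\ll_\a$ the homogeneous piece is $\aa^\lam=\{a\in\aa\mid e_\a(a)\in\ll_\a^\lam\}$, and Lemma \ref{lem1} (the maps $\theta_{\b,\a}$ preserve the $\Lam$-grading) shows that this description does not depend on the fixed root $\a$. Equivalently, $e_\a$ carries $\aa^\lam$ onto $\ll_\a^\lam$ for every $\a\in R^\times$.

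The remaining step is a direct verification. Let $a\in\aa^\lam$, so $e_\a(a)\in\ll_\a^\lam=\ll_\a\cap\ll^\lam$. Because $\tau$ is a Chevalley involution we have $\tau(\ll^\lam)=\ll^{-\lam}$, and by Lemma \ref{151} (whose hypotheses a Chevalley involution satisfies) $\tau(\ll_\a)=\ll_{-\a}$; hence
$$-\tau(e_\a(a))\in\ll_{-\a}\cap\ll^{-\lam}=\ll_{-\a}^{-\lam}.$$
By the defining relation $e_{-\a}(\overline{a})=-\tau(e_\a(a))$ this says $e_{-\a}(\overline{a})\in\ll_{-\a}^{-\lam}$, which, by the grading description above, means $\overline{a}\in\aa^{-\lam}$. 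Thus $\overline{\aa^\lam}\subseteq\aa^{-\lam}$. Applying $\bar{\:\:}$ to this inclusion and using $\overline{\overline{a}}=a$ gives the reverse inclusion $\aa^{-\lam}\subseteq\overline{\aa^\lam}$, whence $\overline{\aa^\lam}=\aa^{-\lam}$ and $\bar{\:\:}$ is pre-Chevalley.

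The substantive content — that $\bar{\:\:}$ is an (anti)-involution at all — is imported wholesale from Proposition \ref{pro5} via the compatibility established in Proposition \ref{pro3}; the genuinely new point, and the only place where the $\Lam$-grading and the division hypothesis enter, is the identification $e_\a(\aa^\lam)=\ll_\a^\lam$ together with the observation that $\tau$ sends $\ll_\a^\lam$ into $\ll_{-\a}^{-\lam}$. I expect no real obstacle beyond keeping the root-grading and the $\Lam$-grading bookkept simultaneously.
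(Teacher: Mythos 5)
Your proposal is correct and follows essentially the same route as the paper's own proof: compatibility via Proposition \ref{pro3}, the (anti)-involution from Proposition \ref{pro5}, and then the grading check $\overline{\aa^\lam}=\aa^{-\lam}$ using $\tau(\ll_\a^\lam)=\ll_{-\a}^{-\lam}$ together with Lemma \ref{lem1} (the paper phrases this last step as $\bar a=-\theta_{\a,-\a}(\tau(e_\a(a)))$ and applies Lemma \ref{lem1} to $\theta_{\a,-\a}$, which is exactly your ``grading description does not depend on $\a$'' observation). Your explicit remarks on where the division hypothesis enters (via Remark \ref{rem1}(ii)) and the onto-ness argument via $\overline{\overline{a}}=a$ are also consistent with the paper's reasoning.
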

\begin{proof}
   By Proposition \ref{pro3}, $\tau$ is compatible with the $R$-grading. Then Proposition \ref{pro5} implies that the coordinate algebra $\aa$ has an (anti)-involution $\bar{\;}$. We fix $\a\in R^\times$ and identify $\aa$ with $\ll_\a$. Recall from (\ref{anti}) that $\bar{\;}:\aa\rightarrow\aa$ is given by $e_{-\a}(\bar a)=-\tau(e_\a(a))$, for all $\a\in\aa$. Equivalently $\bar a=-\theta_{\a,-\a}(\tau(e_\a(a)))$.
    
    We now show that $\bar{\;}:\aa\rightarrow\aa$ maps $\aa^\lam$ onto $\aa^{-\lam}$, for all $\lam\in\Lam$, i.e., $\bar{\;}$ is a pre-Chevalley (anti)-involution of $\aa$. Note that the involution $\tau$ maps $\aa^\lam=\ll_\a^\lam$ onto $\ll_{-\a}^{-\lam}$, for all $\lam\in\Lam$. On the other hand, we know from Lemma \ref{lem1} that $\theta_{\a,-\a}:\ll_{-\a}\rightarrow\ll_\a$ maps $\ll_{-\a}^{-\lam}$ onto $\aa^{-\lam}=\ll_\a^{-\lam}$. Therefore, $\overline{\aa^\lam}=\aa^{-\lam}$, as desired.
\end{proof}

\section{Involutive Lie tori of type $A_\ell (\ell\geq2)$}\label{sec4}

In this section, we examine Lie tori of type $A_\ell (\ell\geq2)$. In \cite[Theorem 5.0.1]{AI23}, we showed that a centerless Lie torus of type $A_\ell (\ell\geq2)$ admits a Chevalley involution if its coordinate algebra is equipped with a pre-Chevalley (anti)-involution.  Here, our aim is to identify all centerless Lie tori of type $A_\ell (\ell\geq2)$ admitting a Chevalley involution. The coordinatization theorem for Lie tori of this type is essential in this identification. We begin by revisiting the coordinatization theorems for Lie tori of type $A_\ell (\ell\geq2)$.

\begin{DEF}
    Assume that $\aa$ and $\bb$ are $\Lam$-graded algebras. We say that $\aa$ and $\bb$ are graded-isomorphic, if there is an algebra isomorphism $\varphi:\aa\rightarrow\bb$ that preserves the $\Lam$-grading.
\end{DEF}

{Berman, Gao, and Krylyuk over the field of complex numbers $\bbbc$ and Yoshii in general proved the coordinatization theorem for Lie tori of type $A_\ell,\ell\geq3$. Moreover, Berman, Gao, Krylyuk, and Neher over $\bbbc$ and Yoshii in general obtained a similar result for Lie tori of type $A_2$.}

\begin{thm}\label{thm4}
    Assume that $\ll$ is a centerless Lie $n$-torus of type $A_\ell (\ell\geq2)$.

    \begin{enumerate}[\upshape (i)]
        \item \upshape{(\cite[Theorem 2.65]{BGK96},\cite[\S4]{Yos01})} If $\ell\geq3$, then $\ll$ is graded-isomorphic to $\mathfrak{sl}_{\ell+1}(\aa)$ where $\aa$ is an associative $n$-torus.
        
        \item \upshape{(\cite[Lemma 3.25]{BGKN95},\cite[Proposition 6.3]{Yos02})} If $\ell=2$, then $\ll$ is graded-isomorphic to $\mathfrak{psl}_3(\aa)$ where $\aa$ is an alternative $n$-torus.
    \end{enumerate}
\end{thm}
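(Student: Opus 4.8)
The plan is to reduce the classification to the Berman--Moody coordinatization already summarized in Theorem \ref{thm0}, upgrading it to a \emph{graded} statement by exploiting the division and one-dimensionality axioms of a Lie torus. Since a Lie $(R,\Z^n)$-torus of type $A_\ell$ is in particular an $R$-graded Lie algebra, it carries a coordinate algebra $\aa=\ll_\a$ built from the $A_2$-pair multiplication (\ref{mul}). By Theorem \ref{thm0}(i) this algebra is associative when $\ell\geq3$ (rank $\geq3$), and by Theorem \ref{thm0}(iii) it is alternative when $\ell=2$. The first task is therefore to verify that $\aa$ is in fact an $n$-torus in the sense of Definition \ref{def7}.

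To do this, I would transport the $\Z^n$-grading from $\ll$ to $\aa$ via the identification $\aa^\lam=\ll_\a^\lam$ (Remark \ref{rem1}(ii)), so that $\dim_\K\aa^\lam=\dim_\K\ll_\a^\lam\leq1$ is immediate from the Lie torus axiom, while $\langle\supp(\aa)\rangle=\Z^n$ follows from $\langle\supp(\ll)\rangle=\Z^n$ together with Lemma \ref{lem1}. The crucial point is invertibility of homogeneous elements: for $0\neq e\in\aa^\lam=\ll_\a^\lam$ the division axiom supplies $f\in\ll_{-\a}^{-\lam}$ with $[[e,f],x_\b]=\langle\b,\a^\vee\rangle x_\b$; translating this coroot relation through the maps $\theta_{\b,\a}$ and the multiplication $m_{(\b,\gamma)}$ should show that the product of $e$ with the element of $\aa^{-\lam}$ corresponding to $f$ is the identity of $\aa$, i.e. $e$ is invertible. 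Thus $\aa$ is an associative (resp. alternative) $n$-torus.

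The reconstruction step is the core of the argument: one must produce a graded isomorphism of $\ll$ onto $\mathfrak{sl}_{\ell+1}(\aa)$ (resp. $\mathfrak{psl}_3(\aa)$). I would define the candidate map on root vectors by $e_\a(a)\mapsto a\,e_{ij}$, where $\a=\epsilon_i-\epsilon_j$ and $e_{ij}$ is the corresponding matrix unit, extend it to $\ll_0$ through the relations (RG3), and check compatibility with the bracket using the $A_2$-pair multiplication rule. Because this map sends $\ll_\a^\lam$ into the $\lam$-component of the matrix model, it is automatically graded. The content of the Berman--Moody and Benkart--Zelmanov recognition theorems is that such a map realizes $\ll$, up to central extensions, as the relevant matrix Lie algebra; invoking the hypothesis that $\ll$ is centerless then pins down the quotient and completes the identification.

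I expect the main obstacle to be the honest treatment of the center, which is precisely where the two cases diverge. For $\ell\geq3$ the associative coordinate torus makes $\mathfrak{sl}_{\ell+1}(\aa)$ centerless, so no quotient is needed; but for $\ell=2$ the alternative (possibly non-associative) torus forces a passage to the projective quotient $\mathfrak{psl}_3(\aa)$, and controlling the universal central extension (the Steinberg Lie algebra) in the non-associative setting is delicate. Checking that the $\Z^n$-grading survives this quotient, and that the recognition theorem is applied in its \emph{graded} form rather than merely up to abstract isomorphism, is the step I would scrutinize most carefully.
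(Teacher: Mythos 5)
There is nothing to compare against here: the paper does not prove Theorem \ref{thm4} at all. It is quoted verbatim from the literature, with the proofs residing in \cite{BGK96}*{Theorem 2.65} and \cite{Yos01}*{\S 4} for $\ell\geq3$, and \cite{BGKN95}*{Lemma 3.25} and \cite{Yos02}*{Proposition 6.3} for $\ell=2$; the surrounding text merely attributes the results. That said, your blind reconstruction is a faithful outline of exactly the strategy those references implement: Berman--Moody coordinatization (the paper's Theorem \ref{thm0}) gives an associative, resp.\ alternative, coordinate algebra; the $\Z^n$-grading is transported to $\aa=\ll_\a$ and the division axiom translates into invertibility of homogeneous elements, so $\aa$ is an $n$-torus in the sense of Definition \ref{def7} --- this graded upgrade is precisely Yoshii's contribution, recorded in the paper as Remark \ref{rem1}(ii), and your use of Lemma \ref{lem1} for $\langle\supp(\aa)\rangle=\Z^n$ is the right mechanism; finally the recognition theorem identifies $\ll$ up to central isogeny with the matrix model, and centerlessness pins down the member of the isogeny class. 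Two points you flag correctly as needing care, and which carry real weight in the cited proofs: first, your assertion that $\mathfrak{sl}_{\ell+1}(\aa)$ is itself centerless for an associative $n$-torus is true but not free --- its center is $\{aI:\,a\in Z(\aa),\ (\ell+1)a\in[\aa,\aa]\}$, and one must check that no nonzero central homogeneous element of a quantum torus lies in $[\aa,\aa]$; second, well-definedness of your candidate map on $\ll_0$ via (RG3) is exactly what the Steinberg-algebra/central-isogeny machinery handles, and the graded (rather than merely abstract) form of the isomorphism is built into Yoshii's treatment since the coordinatization is performed gradedly from the start. So your proposal matches the route of the sources the paper cites, with the expected gaps being precisely the checkable facts those references supply.
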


The coordinate algebras of Lie tori of type $A_\ell (\ell\geq3)$ which are associative tori by Theorem \ref{thm4}, are characterized as ``quantum tori''.

\parag
{(Quantum torus).}\label{def5}
    Let $\bq=(q_{ij})\in M_ n(\bbbk)$ be a {{\it quantum matrix}, namely} an $( n\times n)$-matrix such that 
    \begin{equation}\label{eq1}
        q_{ii}=1,1\leq i\leq n, \quad\text{and}\quad q_{ij}q_{ji}=1,1\leq i\neq j\leq n.
    \end{equation}
   The {\it quantum torus} $\bbbk_\bq=\bbbk_\bq[x_1^{\pm1},\ldots,x_n^{\pm1}]$ based on a quantum matrix $\bq$ is defined as the unital associative algebra generated by $2 n$ elements $x_1^{\pm1},\ldots,x_ n^{\pm1}$ with the defining relations
    \begin{equation}\label{eq2}
        x_i x_i^{-1}=1=x_i^{-1}x_i,1\leq i\leq n, \quad\text{and}\quad x_i x_j=q_{ij}x_j x_i,1\leq i\neq j\leq n.
    \end{equation}
    Notice that if $q_{ij}=1$ for all $i,j$, then $\bbbk_\bq=\bbbk[x_1^{\pm1},\ldots,x_n^{\pm1}]$ becomes the algebra of Laurent polynomials in $n$ variables over $\K$. Let $\{\varepsilon_1,\ldots,\varepsilon_n\}$ be a $\bbbz$-basis of $\Z^n$. Then $\bbbk_\bq$ is $\Z^n$-graded with $\bbbk_\bq=\sum_{\lam\in\Lam}\bbbk x^\lam$, where for $\lam=\lam_1\varepsilon_1+\cdots+\lam_n\varepsilon_n\in\Z^n$, $x^\lam := x_1^{\lam_1}\cdots x_ n^{\lam_ n}$. This $\Z^n$-grading turns $\K_\bq$ into an associative $n$-torus.

\begin{DEF}
    A quantum matrix $\bq=(q_{ij})$ is called {\it elementary} when $q_{ij}=\pm1$ for all $i,j$. The quantum torus $\bbbk_\bq$ based on an elementary quantum matrix $\bq$ is called an {\it elementary quantum torus}.
\end{DEF}

We note that any associative $n$-torus is also an alternative $n$-torus. However, not all alternative tori are associative. The octonion torus, initially discovered in \cite{BGKN95}, serves as an example of a nonassociative alternative torus. This torus is constructed as an octonion algebra using the \textit{Cayley-Dickson} process over an algebra of Laurent polynomials. Y. Yoshii provided a straightforward description of the octonion torus through a presentation in \cite{Yos08}, as detailed below.

\parag
{(The octonion torus).}\label{ex1}
     Consider the alternative algebra $\cc$ over $\K$ with generators $x_i^{\pm1}$ for $1\leq i\leq3$, {and defining relations} $x_ix_i^{-1}=1=x_i^{-1}x_i$ for all $i$, $x_ix_j=-x_jx_i$ for $i\neq j$, and $(x_1x_2)x_3=-x_1(x_2x_3)$. For {$n\geq 3$}, define 
     $$\mathbb{O} := \cc \otimes \K[x_4^{\pm1},\ldots,x_n^{\pm1}],$$
     where $\K[x_4^{\pm1}, \ldots, x_n^{\pm1}]$ is the algebra of Laurent polynomials in $n-3$ variables over $\K$. Let $\{\epsilon_1,\ldots,\epsilon_n\}$ be the standard basis of $\Z^n$. Then $\mathbb{O}$ is an alternative $n$-torus with the $\Z^n$-grading given by $\deg(x_i) = \epsilon_i$. The alternative $n$-torus $\mathbb{O}$ is called the {\it octonion $n$-torus}.

The following description of alternative tori was provided in \cite[Theorem 1.25]{BGKN95} {and \cite[corollary 5.13]{Yos02}.}

\begin{thm}\label{thm3}
    Let $\aa$ be an alternative $n$-torus over $\K$.

    \begin{enumerate}[\upshape (i)]
        \item If $\aa$ is associative, then $\aa$ is graded-isomorphic to the quantum torus $\K_\bq$ based on some quantum matrix $\bq=(q_{ij})\in M_ n(\bbbk)$.
        
        \item If $\aa$ is not associative, then $\aa$ is graded-isomorphic to the octonion $n$-torus $\mathbb{O}$.
    \end{enumerate}
\end{thm}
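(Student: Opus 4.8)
The plan is to extract a homogeneous basis and the ``multiplication cocycle'' of $\aa$, dispose of the associative case by a direct presentation argument, and then lean on the structure theory of alternative algebras for the nonassociative case. First I would show that the support $\Gamma=\supp_{\Z^n}(\aa)$ is all of $\Z^n$: if $a\in\aa^\lam$ and $b\in\aa^\mu$ are nonzero then $ab\in\aa^{\lam+\mu}$ is a product of invertible elements, hence invertible and nonzero; together with $1\in\aa^0$ and $a^{-1}\in\aa^{-\lam}$ this makes $\Gamma$ a subgroup of $\Z^n$, and since $\Gamma$ generates $\Z^n$ by Definition \ref{def7}(ii) we conclude $\Gamma=\Z^n$, so $\dim_\K\aa^\lam=1$ for every $\lam\in\Z^n$. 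Fixing $x_i\in\aa^{\epsilon_i}\setminus\{0\}$ and an ordering/bracketing convention, the monomials $x^\lam$ then form a homogeneous basis, and multiplication is encoded by a map $\tau\colon\Z^n\times\Z^n\to\K^\times$ with $x^\lam x^\mu=\tau(\lam,\mu)\,x^{\lam+\mu}$.

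For part (i), assume $\aa$ is associative. Since $x_ix_j$ and $x_jx_i$ both span the one-dimensional space $\aa^{\epsilon_i+\epsilon_j}$, there are scalars $q_{ij}\in\K^\times$ with $x_ix_j=q_{ij}x_jx_i$, satisfying $q_{ii}=1$ and $q_{ij}q_{ji}=1$, so $\bq=(q_{ij})$ is a quantum matrix. The defining relations (\ref{eq1}) and (\ref{eq2}) of $\K_\bq$ hold for the $x_i$ in $\aa$, so $x_i\mapsto x_i$ extends to a graded algebra homomorphism $\K_\bq\to\aa$; it is surjective because the $x_i$ generate $\aa$ (as $\Gamma=\Z^n$) and injective because both sides are one-dimensional in each degree. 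Hence $\aa$ is graded-isomorphic to $\K_\bq$.

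For part (ii), assume $\aa$ is not associative. By Artin's theorem any two elements generate an associative subalgebra, so the pairwise relations $x_ix_j=q_{ij}x_jx_i$ persist; the new information is carried by the associator $[a,b,c]=(ab)c-a(bc)$, which in an alternative algebra is alternating. On homogeneous generators $[x_i,x_j,x_k]\in\aa^{\epsilon_i+\epsilon_j+\epsilon_k}$, so writing $(x_ix_j)x_k=p_{ijk}\,x_i(x_jx_k)$, the alternating property forces $p_{ijk}=1$ whenever two indices agree and imposes skew relations among the rest. The crux is a rigidity statement: using invertibility together with the Moufang identities one shows that the nucleus $N(\aa)$ is an \emph{associative} subalgebra which is itself a torus---hence a quantum torus by part (i)---and that $\aa$ is the Cayley--Dickson double of $N(\aa)$. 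Kleinfeld's description of nonassociative alternative algebras then confines the nonassociativity to three independent grading directions with all relevant structure constants equal to $\pm1$. After rescaling the generators (the only remaining freedom), one matches the presentation with the defining relations of $\mathbb{O}$ in \ref{ex1}, namely $x_ix_j=-x_jx_i$ for $i\neq j$ and $(x_1x_2)x_3=-x_1(x_2x_3)$, yielding $\aa\cong_{gr}\mathbb{O}$.

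The main obstacle is the rigidity step in part (ii): showing that the torus axioms (one-dimensional homogeneous components plus invertibility) force the associator constants into $\{\pm1\}$ and localize the nonassociativity to a single three-dimensional octonion block, so that $\aa$ splits as $\cc\otimes\K[x_4^{\pm1},\ldots,x_n^{\pm1}]$. This is exactly where the deep input from the theory of alternative algebras (Artin, Moufang, Kleinfeld) is indispensable; by contrast, the bookkeeping for $\tau$ under reordering and rebracketing is routine but lengthy.
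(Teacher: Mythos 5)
The paper itself gives no proof of Theorem \ref{thm3}: it is quoted from \cite[Theorem 1.25]{BGKN95} and \cite[Corollary 5.13]{Yos02}, so your attempt has to be measured against those cited proofs. Your preliminary reductions are correct and agree with them: the support of $\aa$ is a subgroup of $\Z^n$ (homogeneous inverses lie in the opposite degree because $L_a$ is bijective for $a$ invertible in an alternative algebra, and products of invertible elements are invertible), hence equals $\Z^n$ by Definition \ref{def7}(ii), giving $\dim_\K\aa^\lam=1$ for all $\lam$; and part (i) is the standard presentation argument. One small point there: injectivity of $\K_\bq\to\aa$ follows because the kernel is a \emph{graded} ideal and each monomial $x^\lam$ maps to an invertible, hence nonzero, element; dimension counting alone does not do it, though you have all the ingredients.

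The genuine gap is the rigidity step of part (ii), which you yourself flag as the crux: the claim that $\aa$ is the Cayley--Dickson double of its nucleus $N(\aa)$ is false. In a nonassociative alternative torus the nucleus coincides with the center --- this is precisely the content of the Kleinfeld--Slater theory you invoke (a prime nonassociative alternative ring is a Cayley--Dickson ring over its center) --- and $\aa$ is free of rank $8$ over it. Concretely, for $\mathbb{O}$ the nucleus is the commutative Laurent subtorus $\K[x_1^{\pm2},x_2^{\pm2},x_3^{\pm2},x_4^{\pm1},\ldots,x_n^{\pm1}]$, supported on a proper sublattice (so even your parenthetical ``hence a quantum torus by part (i)'' only applies after restricting the grading group, and the nucleus is in fact commutative, not a genuinely quantum torus). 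Since a single Cayley--Dickson doubling produces only a rank-$2$ extension of its base, doubling $N(\aa)$ cannot recover $\aa$. The repaired route, which is the one taken in \cite{BGKN95} and \cite{Yos02}, is: show $\aa$ is prime, apply Slater/Kleinfeld to the central closure to conclude that $\aa\otimes_Z F$ is an octonion algebra over the fraction field $F$ of the center $Z$, then use the $\Z^n$-grading to locate a graded quaternion subtorus whose doubling (equivalently, three successive graded doublings of a commutative Laurent subtorus, as in the construction of $\cc$ in \ref{ex1}) exhausts $\aa$, and only then normalize the structure constants to $\pm1$. Finally, matching with \ref{ex1} requires more freedom than ``rescaling the generators'': one must also change the $\Z$-basis of $\Z^n$ so that the three directions carrying the nonassociativity become $\epsilon_1,\epsilon_2,\epsilon_3$; degree-preserving rescalings alone cannot move the support of the Cayley part onto the first three coordinates.
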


We conclude this section with the following theorem, which characterizes Lie tori that admit a Chevalley involution.
The description of coordinate algebras for Lie tori of type $A_\ell (\ell\geq2)$ given in Theorem \ref{thm3} is essential in our characterization.

\begin{thm}\label{thm5}
    Assume that $\ll$ is a centerless Lie $n$-torus of type $A_\ell, l\geq2$ and $\aa=\bigoplus_{\lam\in\Z^n}\aa^\lam$ is the coordinate algebra of $\ll$. Then $\ll$ admits a Chevalley involution if and only if $\aa$ admits a pre-Chevalley (anti)-involution, if and only if the coordinate algebra $\aa$ {is graded-isomorphic to either the octonion $n$-torus or an elementary quantum torus.}
\end{thm}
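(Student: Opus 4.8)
The plan is to establish the chain of three equivalences in Theorem \ref{thm5} by combining the general machinery of Section \ref{sec3} with the explicit coordinatization theorems for type $A_\ell$. The first equivalence, that $\ll$ admits a Chevalley involution if and only if $\aa$ admits a pre-Chevalley (anti)-involution, is essentially already available: the forward direction is exactly Proposition \ref{pro1}, since a centerless Lie $n$-torus of type $A_\ell$ ($\ell\geq2$) is a division $(R,\Z^n)$-graded Lie algebra with $R$ simply-laced of rank $\geq2$, so any Chevalley involution induces a pre-Chevalley (anti)-involution on $\aa$. For the converse I would invoke \cite[Theorem 5.0.1]{AI23}, which guarantees that whenever the coordinate algebra carries a pre-Chevalley (anti)-involution, the Lie torus admits a Chevalley involution; this is precisely the result the present paper is designed to complete. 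Thus the real content lies in the second equivalence.

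For the remaining equivalence, the strategy is to reduce, via Theorem \ref{thm4} and Theorem \ref{thm3}, to a concrete algebraic question: among the alternative $n$-tori, exactly which ones (up to graded isomorphism) admit a pre-Chevalley (anti)-involution? By Theorem \ref{thm3} there are only two families to test, the quantum tori $\K_\bq$ and the octonion torus $\mathbb{O}$. First I would treat the octonion torus: by its presentation in \ref{ex1}, the assignment $x_i\mapsto x_i^{-1}$ on generators should extend to a graded anti-involution, since inverting each generator reverses the sign-twisted commutation and associativity relations in a compatible way, and sends $\aa^\lam$ to $\aa^{-\lam}$. This furnishes the required pre-Chevalley (anti)-involution in the nonassociative case. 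For the elementary quantum tori, the same assignment $x_i\mapsto x_i^{-1}$ works because $q_{ij}=\pm1$ is fixed under inversion of the structure constants; one checks that this map respects $x_ix_j=q_{ij}x_jx_i$ precisely when each $q_{ij}=\pm1$, again mapping $\aa^\lam$ to $\aa^{-\lam}$.

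The crux of the argument, and the step I expect to be the main obstacle, is the \emph{necessity} direction for quantum tori: I must show that a quantum torus $\K_\bq$ admits a pre-Chevalley (anti)-involution only when $\bq$ is elementary, i.e. only when every $q_{ij}=\pm1$. The plan is to analyze directly how a graded (anti)-involution $\bar{\;}$ must act. Since $\bar{\;}$ preserves the grading in the sense $\overline{\aa^\lam}=\aa^{-\lam}$ and each graded piece is one-dimensional with invertible generator, one has $\overline{x^\lam}=c_\lam x^{-\lam}$ for nonzero scalars $c_\lam$. Applying the (anti)-multiplicativity $\overline{ab}=\bar b\,\bar a$ to the defining relation $x_ix_j=q_{ij}x_jx_i$ and using $\overline{\overline{a}}=a$, I would extract constraints forcing $q_{ij}^2=1$, hence $q_{ij}=\pm1$. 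The delicate point is keeping careful track of the scalars $c_\lam$ and the order-reversal, so that the constraint isolates the structure constants $q_{ij}$ rather than being absorbed into the freedom in $c_\lam$; this bookkeeping, together with normalizing $\bar{\;}$ on the generators, is where the essential computation lives. Once necessity is established, combining it with the sufficiency checks above and with Theorem \ref{thm3} yields that $\aa$ admits a pre-Chevalley (anti)-involution precisely when it is graded-isomorphic to the octonion torus or to an elementary quantum torus, completing the chain of equivalences.
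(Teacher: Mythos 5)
Your plan follows the paper's proof in all essentials. The first equivalence is handled exactly as in the paper (Proposition \ref{pro1} for necessity; the type-$A$ constructions of \cite{AI23} for sufficiency), and the second is reduced via Theorems \ref{thm4} and \ref{thm3} to deciding which alternative $n$-tori carry a pre-Chevalley (anti)-involution. Your necessity argument for quantum tori --- writing $\overline{x_i}=k_ix_i^{-1}$, applying anti-multiplicativity to $x_ix_j=q_{ij}x_jx_i$, and extracting $q_{ij}^2=1$ --- is precisely the paper's computation, and the bookkeeping you flag as delicate is in fact immediate: both sides of the transformed relation carry the same factor $k_ik_j$, so the scalars cancel and cannot absorb $q_{ij}$. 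Your sufficiency check for elementary $\bq$ via $x_i\mapsto x_i^{-1}$ is also the paper's argument, phrased there by verifying the relations (\ref{eq2}) in $\K_\bq^{\mathrm{op}}$ and using $\dim_\bbbk\aa^\lam=1$ for bijectivity.

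The one genuine divergence is the octonion torus. You propose that $x_i\mapsto x_i^{-1}$ extends \emph{directly} to a graded anti-involution of $\mathbb{O}$; the paper instead composes the known anti-involution $\sg$ of \cite[Lemma 1.20]{BGKN95} (given by $x_i\mapsto -x_i$ for $1\leq i\leq 3$ and $x_j\mapsto x_j$ for $j\geq 4$) with the graded automorphism $\tau:x_i\mapsto x_i^{-1}$ of \cite[Proposition 4.2.4]{AI23}, using $\sg\tau=\tau\sg$. Your direct map does work, but the phrase ``reverses the sign-twisted commutation and associativity relations in a compatible way'' glosses over the only place a sign could fail: compatibility with $(x_1x_2)x_3=-x_1(x_2x_3)$ under an order-reversing map amounts to the identity $x_3^{-1}(x_2^{-1}x_1^{-1})=-(x_3^{-1}x_2^{-1})x_1^{-1}$, which, since each $x_i^{-1}$ is a central multiple of $x_i$ in $\mathbb{O}$, reduces to $x_3(x_2x_1)=-(x_3x_2)x_1$; this is true (it follows from the alternating property of the associator together with the defining relation), but it is a nontrivial check that your plan leaves unverified. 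The paper's composition $\sg\tau$ buys exactly this economy --- no new relation-checking, only the commutation of two maps already known to exist --- while your route, once the sign is verified, yields an equally valid (indeed slightly more uniform) pre-Chevalley (anti)-involution, differing from the paper's by the sign on $x_1,x_2,x_3$.
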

\begin{proof}
Assume that $\aa$ is equipped with a pre-Chevalley (anti)-involution. Then it follows from the proofs of \cite[Propositions 5.1.3 and 5.2.3]{AI23} that $\ll$ admits a Chevalley involution. Conversly, if $\ll$  admits a Chevalley involution then by Proposition \ref{pro1}, $\aa$ is equipped with a pre-Chevalley (anti)-involution. We now consider the second ``if and only if" in the statement.

We know from Theorems \ref{thm4} and \ref{thm3} that the coordinate algebra $\aa$ is graded-isomorphic to either the octonion $n$-torus $\mathbb{O}$ ($\ell=2$), or the quantum torus $\K_\bq$ based on some quantum matrix $\bq=(q_{ij})\in M_ n(\bbbk)$. So we may assume $\aa=\mathbb{O}$ or $\aa=\K_\bq$. By the first ``if and only if", we need to prove that $\aa$ is equipped with a pre-Chevalley (anti)-involution if and only if $\aa\neq\K_\bq$ where $\bq$ is non-elementary.

If $\aa=\mathbb{O}$, then there exists an (anti)-involution $\sg$ on $\aa$ determined by $x_i\mapsto-x_i$ for $1\leq i\leq3$, and $x_j\mapsto x_j$ for $4\leq j\leq n$, see \cite[Lemma 1.20]{BGKN95}. Also, $\aa$ is equipped with an automorphism $\tau$ given by $\tau(x_i)=x^{-1}_i$ for $1\leq i\leq n$, with $\tau^2=\id$, and $\tau(\aa^\lam)=\aa^{-\lam}$ for all $\lam\in\Lam$, see \cite[Proposition 4.2.4]{AI23}. Let $\bar{\;}=\sg\tau$. Since $\sg\tau=\tau\sg$, it follows that $\bar{\;}$ is an (anti)-involution of $\aa=\mathbb{O}$ with $\overline{\aa^\lam}=\aa^{-\lam},\lam\in\Lam$, that is, $\bar{\;}$ is a pre-Chevalley (anti)-involution of $\aa$.

Next, let $\aa=\bbbk_{\bq}$. Assume that $\bar{\;}:\aa\rightarrow\aa$ is a pre-Chevalley (anti)-involution. Since $\overline{\aa^\lam}=\aa^{-\lam}$ for $\lam\in\Lam$, we have $\overline{x_i}=k_ix^{-1}_i$ for some $k_i\in\K^\times$, $1\leq i\leq n$. By the definition of $\K_\bq$, we have $x_ix_j=q_{ij}x_jx_i$ for $1\leq i\neq j\leq n$. Now applying $\bar{\;}$ to this relation, we get $k_ik_jx^{-1}_jx^{-1}_i=q_{ij}k_ik_jx^{-1}_ix^{-1}_j$, or equivalently $x_jx_i=q_{ij}x_ix_j$. Thus, $x_jx_i=q_{ij}x_ix_j=q^2_{ij}x_jx_i$.
This gives $q_{ij}=q_{ji}=\pm1,1\leq i\neq j\leq n$, that is, $\bq$ is elementary. Conversely, assume that $\bq$ is elementary. Since the elements $x^{\mp1}_i,1\leq i\leq n$, of the opposite algebra $\K_\bq^{\text{op}}$ satisfy the defining relations (\ref{eq2}), we get an (anti)-homomorphism $\bar{\;}:\K_\bq\rightarrow\K_\bq$ satisfying $\overline{x_i}=x^{-1}_i,1\leq i\leq n$. The map $\bar{\;}$ is clearly a bijection due to the fact that $\dim_\bbbk \aa^\lam=1$ for all $\lam\in\Lam$. Thus, $\bar{\;}$ is an (anti)-involution on $\aa=\K_\bq$ with $\overline{\aa^\lam}=\aa^{-\lam}$, for all $\lam\in\Lam$, that is, $\bar{\;}$ is a pre-Chevalley (anti)-involution of $\aa$. This concludes the proof.
\end{proof}

\begin{rem}
    {Yoshii showed that a centerless Lie torus of type $A_1$ can be constructed using the Tits-Kantor-Koecher (TKK) construction from a Jordan torus \cite[Theorem 1]{Yos00}. Furthermore, he described five families of Jordan $n$-tori and then proved that every Jordan $n$-torus is graded-isomorphic to a torus belonging to one of these families \cite[Theorem 2]{Yos00}. The simplest is the family consisting of the Jordan tori $\K_\bq^+$, which $\K_\bq^+$ denotes the $\Z^n$-graded algebra $\K_\bq$ with multiplication $x\cdot y =1/2(xy+yx)$. In \cite[Proposition 5.3.4]{AI23}, we showed the existence of Chevalley involutions for all Lie tori of type $A_1$. In particular, the Lie torus $\text{TKK}(\K_\bq^+)$ admits a Chevalley involution for any quantum torus $\K_\bq$. Thus, in the sense of Theorem \ref{thm5}, type $A_1$ is considered special.}
\end{rem}

\begin{rem}
    {We note that Theorem \ref{thm5} completes the investigation of the existence of Chevalley involutions for Lie tori of reduced types, as initiated in \cite{AI23}. It seems that studying the existence problem of Chevalley involutions for the non-reduced case, that is, type $BC$ requires engaging with various structural theories and results presented in \cite{AY03}, \cite{AFY08}, \cite{Fau08} and \cite{AB10}. We plan to address this problem in future work.}
\end{rem}

\begin{bibdiv}
	\begin{biblist}
		
		\bib{AABGP97}{article}{
		      label={AABGP97},
			author={{Allison}, Bruce},
			author={{Azam}, Saeid},
			author={{Berman}, Stephen},
			author={{Gao}, Yun},
			author={{Pianzola}, Arturo},
			title={{Extended affine Lie algebras and their root systems}},
			date={1997},
			ISSN={0065-9266; 1947-6221/e},
			journal={{Mem. Am. Math. Soc.}},
			volume={603},
			pages={122},
		}

            \bib{AB10}{article}{
                author={Allison, Bruce},
                author={Benkart, Georgia},
                isbn={978-0-8218-4507-3},
                book={
                title={Quantum affine algebras, extended affine Lie algebras, and their applications. Proceedings of the workshop, March 2--7, 2008, Banff, Canada},
                publisher={Providence, RI: American Mathematical Society (AMS)},
                },
                title={Unitary Lie algebras and Lie tori of type {{\(\text{BC}_r\)}}, {{\(r\geq 3\)}}},
                pages={1--47},
                date={2010},
            }   

		\bib{AG01}{article}{
			author={Allison, Bruce},
			author={Gao, Yun},
			title={The root system and the core of an extended affine {Lie} algebra.},
			date={2001},
			ISSN={1022-1824},
			journal={Sel. Math., New Ser.},
			volume={7},
			number={2},
			pages={149\ndash 212},
		}

            \bib{AFY08}{article}{
                author={Allison, Bruce},
                author={Faulkner, John},
                author={Yoshii, Yoji},
                issn={0092-7872},
                issn={1532-4125},
                title={Structurable tori},
                journal={Commun. Algebra},
                volume={36},
                number={6},
                pages={2265--2332},
                date={2008},
                publisher={Taylor \& Francis, Philadelphia, PA},
            }

            \bib{AY03}{article}{
                author={Allison, Bruce},
                author={Yoshii, Yoji},
                issn={0022-4049},
                issn={1873-1376},
                title={Structurable tori and extended affine Lie algebras of type BC{{\(_{1}\)}}},
                journal={J. Pure Appl. Algebra},
                volume={184},
                number={2-3},
                pages={105--138},
                date={2003},
                publisher={Elsevier (North-Holland), Amsterdam},
            }
		
		\bib{AFI22}{article}{
                label={AFI22},
			author={Azam, Saeid},
			author={Farahmand~Parsa, Amir},
			author={Izadi~Farhadi, Mehdi},
			title={Integral structures in extended affine {Lie} algebras},
			date={2022},
			ISSN={0021-8693},
			journal={J. Algebra},
			volume={597},
			pages={116\ndash 161},
		}

            \bib{AI23}{article}{
                label={AI23},
			author={Azam, Saeid},
			author={Izadi~Farhadi, Mehdi},
			title={Chevalley involutions for {Lie} tori and extended affine {Lie}
				algebras},
			date={2023},
			ISSN={0021-8693},
			journal={J. Algebra},
			volume={634},
			pages={1\ndash 43},
		}

		\bib{BZ96}{article}{
			author={Benkart, Georgia},
			author={Zelmanov, Efim},
			title={Lie algebras graded by finite root systems and intersection matrix algebras},
			date={1996},
			ISSN={0020-9910},
			journal={Invent. Math.},
			volume={126},
			number={1},
			pages={1\ndash 45},
		}
        
		\bib{BGK96}{article}{
			author={Berman, Stephen},
			author={Gao, Yun},
			author={Krylyuk, Yaroslav~S.},
			title={Quantum tori and the structure of elliptic quasi-simple {Lie}
				algebras},
			date={1996},
			ISSN={0022-1236},
			journal={J. Funct. Anal.},
			volume={135},
			number={2},
			pages={339\ndash 389},
			url={semanticscholar.org/paper/ec0c3961f4e077104c5d099700fb975dd7aa01e4},
		}
		
		\bib{BGKN95}{article}{
			author={Berman, Stephen},
			author={Gao, Yun},
			author={Krylyuk, Yaroslav},
			author={Neher, Erhard},
			title={The alternative torus and the structure of elliptic quasi-simple
				{Lie} algebras of type {{\(A_ 2\)}}},
			date={1995},
			ISSN={0002-9947},
			journal={Trans. Am. Math. Soc.},
			volume={347},
			number={11},
			pages={4315\ndash 4363},
		}
		
		\bib{BM92}{article}{
			author={Berman, S.},
			author={Moody, R.~V.},
			title={Lie algebras graded by finite root systems and the intersection
				matrix algebras of {Slodowy}},
			date={1992},
			ISSN={0020-9910},
			journal={Invent. Math.},
			volume={108},
			number={2},
			pages={323\ndash 347},
		}
		
		\bib{Bou68}{misc}{
			author={Bourbaki, N.},
			title={{Groupes} et alg{\`e}bres de {Lie}. Ch. {VI}},
			publisher={Hermann, Paris},
			date={1968},
		}

            \bib{Fau08}{article}{
                author={Faulkner, John},
                issn={0092-7872},
                issn={1532-4125},
                title={Lie tori of type {{\(BC_{2}\)}} and structurable quasitori},
                journal={Commun. Algebra},
                volume={36},
                number={7},
                pages={2593--2618},
                date={2008},
                publisher={Taylor \& Francis, Philadelphia, PA},
            }
        
		\bib{Gao96}{article}{
			author={Gao, Yun},
			title={Involutive {Lie} algebras graded by finite root systems and
				compact forms of {IM} algebras},
			date={1996},
			ISSN={0025-5874},
			journal={Math. Z.},
			volume={223},
			number={4},
			pages={651\ndash 672},
			url={https://eudml.org/doc/174944},
		}

            \bib{MY06}{article}{
                 author={Morita, Jun},
                 author={Yoshii, Yoji},
                 issn={0021-8693},
                 issn={1090-266X},
                 title={Locally extended affine Lie algebras},
                 journal={J. Algebra},
                 volume={301},
                 pages={59--81},
                 date={2006},
                }

		\bib{Neh04}{article}{
			author={{Neher}, Erhard},
			title={{Extended affine Lie algebras}},
			date={2004},
			ISSN={0706-1994},
			journal={{C. R. Math. Acad. Sci., Soc. R. Can.}},
			volume={26},
			number={3},
			pages={90\ndash 96},
		}
		
		\bib{Neh04-1}{article}{
			author={{Neher}, Erhard},
			title={{Lie tori}},
			date={2004},
			ISSN={0706-1994},
			journal={{C. R. Math. Acad. Sci., Soc. R. Can.}},
			volume={26},
			number={3},
			pages={84\ndash 89},
		}
		
		\bib{Neh11}{incollection}{
			author={{Neher}, Erhard},
			title={{Extended affine Lie algebras and other generalizations of affine
					Lie algebras -- a survey}},
			date={2011},
			booktitle={{Developments and trends in infinite-dimensional Lie theory}},
			publisher={Basel: Birkh\"auser},
			pages={53\ndash 126},
		}
		
		\bib{Neh96}{article}{
			author={Neher, Erhard},
			title={Lie algebras graded by 3-graded root systems and {Jordan} pairs
				covered by grids},
			date={1996},
			ISSN={0002-9327},
			journal={Am. J. Math.},
			volume={118},
			number={2},
			pages={439\ndash 491},
			url={muse.jhu.edu/journals/american_journal_of_mathematics/toc/ajm118.2.html},
		}
		
		\bib{Yos00}{article}{
			author={Yoshii, Yoji},
			title={Coordinate algebras of extended affine {Lie} algebras of type
				{{\(A_1\)}}},
			date={2000},
			ISSN={0021-8693},
			journal={J. Algebra},
			volume={234},
			number={1},
			pages={128\ndash 168},
		}
		
		\bib{Yos01}{article}{
			author={Yoshii, Yoji},
			title={Root-graded {Lie} algebras with compatible grading},
			date={2001},
			ISSN={0092-7872},
			journal={Commun. Algebra},
			volume={29},
			number={8},
			pages={3365\ndash 3391},
		}
		
		\bib{Yos02}{article}{
			author={Yoshii, Yoji},
			title={Classification of division {{\({\mathbb Z}^n\)}}-graded
				alternative algebras},
			date={2002},
			ISSN={0021-8693},
			journal={J. Algebra},
			volume={256},
			number={1},
			pages={28\ndash 50},
		}
		
		\bib{Yos04}{article}{
			author={Yoshii, Yoji},
			title={Root systems extended by an {Abelian} group and their {Lie}
				algebras},
			date={2004},
			ISSN={0949-5932},
			journal={J. Lie Theory},
			volume={14},
			number={2},
			pages={371\ndash 394},
			url={https://eudml.org/doc/125971},
		}
		
		\bib{Yos06}{article}{
			author={{Yoshii}, Yoji},
			title={{Lie tori -- a simple characterization of extended affine Lie
					algebras}},
			date={2006},
			ISSN={0034-5318; 1663-4926/e},
			journal={{Publ. Res. Inst. Math. Sci.}},
			volume={42},
			number={3},
			pages={739\ndash 762},
		}
		
		\bib{Yos08}{article}{
			author={Yoshii, Y.},
			title={Cayley polynomials},
			date={2008},
			ISSN={0373-9252},
			journal={Algebra Logika},
			volume={47},
			number={1},
			pages={54\ndash 70},
		}
		
	\end{biblist}
\end{bibdiv}

\end{document}